\documentclass[leqno,11pt]{amsart}
\usepackage[top=1.40in, bottom=1.40in, left=1.40in, right=1.40in]{geometry}
\usepackage{amscd,amsfonts,mathrsfs,amsthm,enumerate}
\usepackage{amssymb, amsmath,bigints}
\usepackage{stmaryrd}
\usepackage{epsfig}
\usepackage[sorted-cites,nobysame]{amsrefs}
\usepackage{latexsym}
\usepackage[usenames,dvipsnames]{color}
\usepackage[T1]{fontenc}
\usepackage{calligra}
\DeclareFontShape{T1}{calligra}{m}{n}{<->s*[2.3]callig15}{}
\DeclareMathAlphabet{\mathcalligra}{T1}{calligra}{m}{n}
\usepackage[colorlinks=true,linkcolor=blue,citecolor=BrickRed,urlcolor=RoyalBlue]{hyperref}
\hfuzz = 6 pt
\parskip = 2 mm

\newtheorem{theorem}{Theorem}[section]

\newtheorem{lemma}[theorem]{Lemma}
\newtheorem{corollary}[theorem]{Corollary}
\newtheorem{proposition}[theorem]{Proposition}
\newtheorem{definition}[theorem]{Definition}
\theoremstyle{definition}
\newtheorem*{assumption*}{$\lambda_{1}$-Condition}

\newtheorem{remark}[theorem]{Remark}

\DeclareMathOperator\Int{int} 

\def\B{\mathbb B}
\def\C{\mathbb C} 
\def\N{\mathbb N}
\def\R{\mathbb R}

\def\cH{\mathcal H}
\def\cK{\mathcal K}

\def\cS{\mathcal S}

\def\scrB{\mathscr B} 
\def\scrC{\mathscr C} 
\def\scrD{\mathscr D} 
\def\scrE{\mathscr E} 
\def\scrL{\mathscr L} 
\def\scrM{\mathscr M} 

\def\re{{\rm Re}}

\def\Int{{\rm int\,}}

\numberwithin{equation}{section}

\begin{document}

\date{\today}
\subjclass[2020]{35B40, 35C20, 58J40, 35K59, 35K65, 35R01}
\thanks{N. Roidos is participating in a research project which is implemented in the framework of H.F.R.I call ``Basic research Financing (Horizontal support of all Sciences)'' under the National Recovery and Resilience Plan ``Greece 2.0" funded by the European Union - NextGenerationEU (H.F.R.I. Project Number: 14758).}

\thanks{N. Roidos and E. Schrohe were supported by Deutsche Forschungsgemeinschaft, Grant \mbox{SCHR 319/9-1}}

\title[Asymptotics of the PME near Conical Singularities]{Asymptotics of Solutions to the Porous Medium Equation near Conical Singularities}

\author{Nikolaos Roidos}
\address{Nikolaos Roidos. Department of Mathematics, University of Patras, 26504 Rio Patras, Greece}
\email{roidos@math.upatras.gr}

\author{Elmar Schrohe} 
\address{E. Schrohe. Leibniz University Hannover, Institute of Analysis, Welfengarten 1, 30167 Hannover, Germany }
\email{schrohe@math.uni-hannover.de} 

\maketitle

\begin{abstract}We show that, on a manifold with conical singularities, the asymptotics of the solutions to the porous medium equation near the conical points are determined by the spectrum of the Laplacian on the cross-section of the cone. The key to this result is a precise description of the maximal domain of the cone Laplacian.
\end{abstract} 

\tableofcontents

\section{Introduction and Main Results} 
In this article, we study the porous medium equation (PME)
\begin{eqnarray}
\label{PME}
\partial_t u -\Delta (u^m) &=& F(t,u) \\
\label{PME2}
u_{|t=0}&=& u_0
\end{eqnarray}
on a manifold with conical singularities. Among other phenomena, the PME models the flow of a gas in a porous medium. In the above equation, $u$ is the density of the gas, $t$ is a time parameter, $\Delta$ is the Laplace-Beltrami operator, $m>0$, and $F$ is a forcing term. We assume the initial value $u_0$ to be strictly positive.
We showed in \cite{RS2} that the PME on a manifold with conical singularities has a unique maximal regularity solution in weighted cone Sobolev spaces for strictly positive initial data. For $F=0$, long time existence of these solutions was established in \cite{RS3}. The approach, based on a theorem of Cl\'ement and Li \cite{CL}, made the geometry of the conical singularities partly visible. Namely, the best possible choice of the weight in the weighted Sobolev space making up the domain of the Laplacian is linked to the first nonzero eigenvalue of the associated Laplacian on the cross-sections of the cone, see Theorem \ref{old}, below.

In this article, we will introduce a refined setting that allows us to determine the asymptotics of the solution $u$ near the tip. This enables us to answer a basic question: 
\begin{quote}
``Suppose we start with an initial value of the form $u_0=c_0+v_0$, where $c_0$ is a positive constant and $v_0\ge0$ vanishes to infinite order near the tip. What are the asymptotics the solution can develop in short time?'' 
\end{quote} 

To this end we will use a different approach that relies on an alternative form of the porous medium equation: The substitution $v=u^m$ transforms \eqref{PME} into the equation 
\begin{eqnarray}
\label{PMEAlt}
\partial_t v -mv^{(m-1)/m} = G(t,v), \quad v(0) = v_0, 
\end{eqnarray}
with $G(t,v) =mv^{(m-1)/m}F(t, v^{1/m})$ and $v_0=u_0^m$. 

We will first determine the asymptotics of $v$ and derive from these the asymptotics of $u$. The key is a spectral invariance result, see Theorem \ref{SI2}, below. 

The terms in the expansion of are polynomials in functions which, near the tip, are of the form $x^{-q_j^-}c_j(y)$ and $x^{-q_j^-}\ln x c_j(y)$, where $x$ is the distance to the tip, $q_j^-\le0$ is determined by the $j$-th eigenvalue $\lambda_j$ of the Laplacian on the cross-section, see \eqref{qj}, $y$ is a local variable in the cross-section, and $c_j$ belongs to the associated $\lambda_j$-eigenspace. 

We will start by recalling basic elements of the calculus on conic manifolds; see also \cite{S24} for more details. Readers familiar with these issues may proceed immediately to Section \ref{mainres}. 

This article continues the research on nonlinear parabolic evolution equations on manifolds with conical singularities by S. Coriasco, P. Lopes, J. Seiler, Y. Shao and the authors (\cite{CSS1}, \cite{LR2}, \cite{LR1}, \cite{Ro1}, \cite{RS4}, \cite{RSS}, \cite{RSh1}, \cite{RSh2}); see also \cite{BV},
\cite{BBGM24}, \cite{GHUV}, \cite{MP73}, 
\cite{MRS15}, \cite{RoSa} to mention just a few.

\subsection{Manifolds with conical singularities}
We model a manifold with conical singularities by an $(n+1)$-dimensional manifold $\B$, $n\ge1$, with boundary $\partial \B$ together with a degenerate Riemannian metric. More precisely, in a collar neighborhood $[0,1)\times \partial \B$ of the boundary, we fix coordinates $(x,y)$, where $x$ is a boundary defining function and $y$ a coordinate along the boundary. We then endow the interior $\Int(\B)$ of $\B$ with a Riemannian metric which, in the above collar neighborhood, assumes the form 
\begin{eqnarray}
\label{metric}
g= dx^2 + x^2 h(x),
\end{eqnarray}
where $x\mapsto h(x)$ is a smooth (up to $x=0$) family of non-degenerate Riemannian metrics on $\partial \B$. 
We speak of a straight conical singularity when $h$ is actually independent of $x$. 
From this perspective we can view $\partial \B$ as the cross-section of the cone. Note that the boundary may have several components corresponding to several conical singularities. 

\subsection{The Laplacian on Mellin Sobolev spaces}\label{laplacian}
A short computation shows that, in the above collar neighborhood of the boundary, the Laplace-Beltrami operator $\Delta$ with respect to the metric $g$ on $\Int(\B)$ can be written in the form 
\begin{eqnarray}
\label{LB}
\Delta = x^{-2} \left((-x\partial_x)^2-(n-1+H(x))(-x\partial_x) + \Delta_{h(x)}\right), 
\end{eqnarray}
where $H(x) = \frac12 \frac{x\partial_x\det h(x)}{\det h(x)}$, and $\Delta_{h(x)} $ is the Laplace-Beltrami operator on $\partial \B$ with respect to the metric $h(x)$. Note that $H(x)\to0$ as $x\to 0$ since $\det h$ is a smooth function of $x$ up to $x=0$ and that $H\equiv0$, if the cone is straight, i.e. the metric $h$ does not depend on $x$ near $\partial \B$. 

The Laplace-Beltrami operator (or for short the Laplacian) naturally acts on scales of weighted Mellin (or cone) Sobolev spaces $\cH^{s,\gamma}_p(\B)$, where $s,\gamma\in \R$ and $1<p<\infty$. They are easiest described when $s\in \N_0$. Then 
\begin{eqnarray}\label{hsg}
\lefteqn{\cH^{s,\gamma}_p(\B)= \Big\{ u\in H^s_{p,loc}(\Int(\B)): }\nonumber\\
&&x^{\frac{n+1}2-\gamma} \omega(x) (x\partial_x)^kD^\alpha_y u(x,y)\in L^p\Big([0,1)\times \partial \B; \frac{dxdy}x\Big), \forall \; k+|\alpha|\le s \Big\}. 
\end{eqnarray}
Here $\omega=\omega(x)$ is a cut-off function near the boundary, i.e., $0\le \omega\le 1$, $\omega\equiv 1$ near $x=0$ and $\omega\equiv 0$ near $x=1$. Obviously, the space $\cH^{s,\gamma}_p(\B)$ is independent of the choice of $\omega$ up to equivalent norms. For $s=0$ and $p=2$ this furnishes the $L^2$-space with respect to the metric \eqref{metric} up to an equivalent norm. See the Appendix for details. 

\subsection{Closed extensions} Clearly, the Laplacian is a bounded operator 
\begin{eqnarray*}
\Delta: \cH^{s+2,\gamma +2}_p(\B) \to \cH^{s,\gamma}_p(\B)
\end{eqnarray*}
for all choices of $s,\gamma$ and $p$. The maximal regularity approach requires us to consider it as a closed unbounded operator between Banach spaces. At first glance, one might be inclined to choose $\cH^{s,\gamma}_p(\B)$ as the space in which the Laplacian acts and $\cH^{s+2,\gamma+2}_p(\B)$ as its domain. However, this might not be a closed extension, see Theorem \ref{Dom}, below. Moreover, supposing that $s>(n+1)/p$, the functions in $\cH^{s+2,\gamma+2}_p(\B)$ are continuous. As $x\to0^+$, they will tend to zero, if $\gamma+2\ge (n+1)/2$, and they may be unbounded if $\gamma+2< (n+1)/2$. One certainly wants functions in the domain that can attain nonzero values as $x\to 0$, on the other hand one would not want functions blowing up at $x=0$. 
The possible closed extensions of the Laplacian as an unbounded operator in $\cH^{s,\gamma}_p(\B)$, can be determined as laid out in \cite{SS2}, building on work of Gil, Krainer and Mendoza \cite{GKM1}, \cite{GKM2} and Lesch \cite{Lesch}. A crucial role is played by the poles of the inverted principal Mellin symbol. The principal Mellin symbol $\sigma_M(\Delta)$ of the Laplacian is the operator-valued polynomial 
\begin{eqnarray}
\label{sigmaM}
\sigma_M(\Delta): \C \to \scrL(H^2(\partial \B), L^2(\partial \B))
\end{eqnarray}
given by 
\begin{eqnarray}
\label{conormal}
\sigma_M(\Delta)(z) = z^2 -(n-1)z +\Delta_{h(0)}.
\end{eqnarray}
Clearly, the points of non-invertibility are
\begin{eqnarray}
\label{qj}
q_{j}^{\pm}=\frac{n-1}{2}\pm\sqrt{\Big(\frac{n-1}{2}\Big)^{2}-\lambda_{j}},\ j=0,1,2, \ldots,
\end{eqnarray}
where $0=\lambda_0>\lambda_1> \lambda_2>\ldots$ are the different eigenvalues of $\Delta_{h(0)}$.
We conclude that 
\begin{eqnarray}\label{inverse}
\sigma_M(\Delta)^{-1}(z) = \sum_{j=0}^\infty \frac{\pi_j}{(z-q_j^+)(z-q_j^-)},
\end{eqnarray}
where $\pi_j$ is the orthogonal projection in $L^2(\partial \B)$ onto the eigenspace $E_j$ associated with the eigenvalue $\lambda_j$. This shows that the poles of $z\mapsto \sigma_M(z)^{-1}$ are all simple except when $n=1$, for then $z=q_0^+=q_0^-=0$ is a double pole. 

The Laplacian, as an unbounded operator in $\cH^{s,\gamma}_p(\B)$, has two special closed extensions: the minimal, $\Delta_{\min}$, which is the closure of $\Delta$ with domain $C^\infty_c(\Int(\B))$ and the maximal $\Delta_{\max} $ whose domain consists of all $u\in \cH^{s,\gamma}_p(\B)$ such that $\Delta u\in \cH^{s,\gamma}_p(\B)$. 

\begin{theorem}\label{Dom}
Assume that $\frac{n+1}2-\gamma-2$ is not a pole of $\sigma_M(\Delta)^{-1}$. Then
\begin{eqnarray*}
\scrD(\Delta_{\min}) = \cH^{s+2,\gamma+2}_p(\B).
\end{eqnarray*}
The domain of the maximal extension is
\begin{eqnarray}\label{Dmax}
\scrD(\Delta_{\max}) = 
\cH^{s+2,\gamma+2}_p(\B) \oplus \bigoplus_{q_j^\pm\in I_\gamma} \scrE_{q_j^\pm},
\end{eqnarray}
where the sum is over all $q_j^\pm$ in the interval 
\begin{eqnarray}
\label{Igamma}
I_\gamma=\Big(\frac{n+1}2-\gamma-2, \frac{n+1}2-\gamma \Big) ,
\end{eqnarray}
and the $\scrE_{q_j^\pm}$ are finite-dimensional spaces of smooth functions on $\Int(\B)$ with special asymptotics as $x\to 0$ that can be determined explicitly, see the Appendix. 

As a consequence, any closed extension $\underline \Delta$ of the Laplacian has a domain of the form
$$\scrD(\underline\Delta) = \cH^{s+2,\gamma+2}_p(\B) \oplus \underline\scrE,$$
with a subspace $\underline\scrE$ of $\bigoplus_{q_j^\pm\in I_\gamma} \scrE_{q_j^\pm}$, provided $\frac{n+1}2-\gamma-2$ is not a pole of $\sigma_M(\Delta)^{-1}$.
\end{theorem}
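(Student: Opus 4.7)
The plan is to follow the Mellin-analytic strategy developed in \cite{SS2}, building on \cite{GKM1}, \cite{GKM2}, \cite{Lesch}. Near $x=0$ the Laplacian is of Fuchs type, $\Delta = x^{-2}P$ with $P$ a second-order Mellin-totally characteristic operator whose conormal symbol is \eqref{conormal}. The natural tool is the Mellin transform along the vertical line $\Gamma_\delta=\{z\in\C:\re z=\delta\}$; the correspondence $-x\partial_x\leftrightarrow z$ converts $\Delta$ into multiplication by $x^{-2}\sigma_M(\Delta)(z)$, and membership in $\cH^{s,\gamma}_p(\B)$ translates (modulo the smooth part away from the tip) to an $L^p$-condition on the Mellin transform on $\Gamma_{(n+1)/2-\gamma}$.

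First I would construct a Mellin parametrix $Q$ for $\Delta$ by inverting $\sigma_M(\Delta)$ along $\Gamma_\delta$ with $\delta=(n+1)/2-\gamma-2$. Since by hypothesis $\delta$ is not a pole of $\sigma_M^{-1}$, this inversion is well defined on the whole line, and the usual Mellin pseudodifferential calculus produces an operator $Q:\cH^{s,\gamma}_p(\B)\to \cH^{s+2,\gamma+2}_p(\B)$ such that $\Delta Q-I$ and $Q\Delta-I$ are ``smoothing plus Green'' remainders with finite-dimensional range of the required asymptotic type. Combined with the density of $C_c^\infty(\Int(\B))$ in $\cH^{s+2,\gamma+2}_p(\B)$ in the graph norm of $\Delta$, a standard closure argument then identifies $\scrD(\Delta_{\min})$ with $\cH^{s+2,\gamma+2}_p(\B)$.

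For the maximal domain I would take $u\in\scrD(\Delta_{\max})$ and decompose $u=Q\Delta u+(I-Q\Delta)u$, where the first summand already lies in $\cH^{s+2,\gamma+2}_p(\B)$. To analyse the second summand I would shift the Mellin contour from $\Gamma_{(n+1)/2-\gamma}$ (on which $u$ lives) to $\Gamma_{(n+1)/2-\gamma-2}$ (on which $Q\Delta u$ lives). By the residue theorem this shift picks up exactly the poles $q_j^\pm$ of $\sigma_M^{-1}$ lying in the strip $I_\gamma$ of \eqref{Igamma}; computing the residues by means of \eqref{inverse} and multiplying by a cut-off $\omega$ produces basis vectors of the form $\omega(x)x^{-q_j^\pm}\sum_k(\log x)^k c_{jk}(y)$ with $c_{jk}\in E_j$, and logarithmic terms appear only at the double pole $q_0^\pm=0$ in the exceptional case $n=1$. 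Each space $\scrE_{q_j^\pm}$ is finite-dimensional because $\dim E_j<\infty$, which gives the decomposition \eqref{Dmax}.

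The statement about an arbitrary closed extension $\underline\Delta$ is then a matter of linear algebra: $\scrD(\Delta_{\min})\subseteq\scrD(\underline\Delta)\subseteq\scrD(\Delta_{\max})$, and since $\scrD(\Delta_{\max})/\scrD(\Delta_{\min})=\bigoplus_{q_j^\pm\in I_\gamma}\scrE_{q_j^\pm}$ is finite-dimensional, any subspace is automatically closed, so $\scrD(\underline\Delta)=\scrD(\Delta_{\min})\oplus\scrE$ for the preimage $\scrE$ of the corresponding quotient. The main technical obstacle is the parametrix construction itself: establishing Mellin-$L^p$ boundedness of the operator-valued symbol $\sigma_M(\Delta)^{-1}$ (away from its poles) with enough control to justify the contour shift, and handling the variable cross-section metric $h(x)$ and the lower-order term $H(x)(-x\partial_x)$ in \eqref{LB}, which perturb the model conormal symbol $\sigma_M(\Delta)$ but do not alter the set of poles; the case of the higher-order pole when $n=1$ also requires slightly more bookkeeping in the residue calculation.
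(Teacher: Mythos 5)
Your overall strategy --- Mellin parametrix obtained by inverting $\sigma_M(\Delta)$ on the line $\re z=\frac{n+1}2-\gamma-2$, closure argument for $\scrD(\Delta_{\min})$, contour shift across the strip $I_\gamma$ picking up residues for $\scrD(\Delta_{\max})$, and the finite-dimensional linear algebra for intermediate extensions --- is exactly the route of \cite{SS2} (building on \cite{GKM1}, \cite{GKM2}, \cite{Lesch}), which is what the paper itself invokes rather than reproving; so the architecture of your argument is fine.

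There is, however, a genuine inaccuracy in your description of the spaces $\scrE_{q_j^\pm}$. The residue computation you describe, applied to $x^{-z}\sigma_M(\Delta)(z)^{-1}\scrM u(z)$, yields functions $\omega(x)x^{-q_j^\pm}(\log x)^k c_{jk}(y)$ --- but these are the asymptotics spaces $\widehat\scrE_{q_j^\pm}$ of the \emph{model cone operator} $\widehat\Delta$ in \eqref{modelcone}, not of $\Delta$ itself. You correctly note that the $x$-dependence of $h(x)$ and the term $H(x)(-x\partial_x)$ do not alter the set of poles, but you then conclude that they do not alter the singular functions either, and that is false whenever a pole $\sigma$ satisfies $\sigma-1\ge\frac{n+1}2-\gamma-2$. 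In that case the asymptotic solvability of $\Delta u=f$ forces a second-order term governed by $g_1(z)=-(f_0(z-1))^{-1}f_1(z)$ with $f_1(z)=H'z+\Delta'$ (see \eqref{f1}, \eqref{g1}), i.e.\ the correction operator $G_\sigma^{(1)}$ of \eqref{eq:sigma0}; one has $\scrE_\sigma=\mathrm{range}(G_\sigma^{(0)}+G_\sigma^{(1)})$, not $\mathrm{range}\,G_\sigma^{(0)}$ (Theorem \ref{theta}). Concretely, for $n=1$, $\gamma\ge0$ and $-1$ not a pole, $\scrE_0$ contains $\omega(x)\bigl(e_0+x(1+\Delta_{h(0)})^{-1}((\partial_xH)|_{x=0}e_1)\bigr)$, whose $O(x)$ term is neither of the form $x^{-q_j^\pm}e$ with $e$ in an eigenspace nor absorbable into $\cH^{s+2,\gamma+2}_p(\B)$ when $\gamma\ge0$ (Lemma \ref{n1gammapos}); similar corrections occur for $n\ge2$ when $-1$ is a pole (Lemma \ref{nge2pole}). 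To repair your argument you would need to iterate the parametrix one step in the Taylor expansion of the full Mellin symbol $z^2-(n-1-H(x))z+\Delta_{h(x)}$ in $x$, which is precisely what produces $G_\sigma^{(1)}$; only after that --- and after the observations of Remarks \ref{omit1} and \ref{omit2} --- can one pass to the simplified generators \eqref{scrEqsimpl}, \eqref{scrE0} used in the domain \eqref{domD}.
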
 

\begin{remark}
In case $\frac{n+1}2-\gamma-2$ {\em is} a pole of $\sigma_M(\Delta)^{-1}$, the minimal domain is 
$$\scrD(\Delta_{\min}) = \Big\{ u\in \bigcap_{\varepsilon>0} \cH^{s+2,\gamma+2-\varepsilon}_p(\B): 
\Delta u \in \cH^{s,\gamma}_p(\B)\Big\}.$$
\end{remark} 

\subsection{Previous work}\label{Previous}In the articles \cite{RS2} and \cite{RS3} we worked with the extensions $\underline \Delta$ of the Laplacian with the domain
\begin{eqnarray}\label{dom_old}
\scrD(\underline\Delta) = \cH^{s+2,\gamma+2}_p(\B) \oplus \underline\scrE_0,
\end{eqnarray}
where $\gamma$ was chosen such that $\max\{-2, q_1^-\}<\frac{n+1}2-\gamma-2 <0$. The interval $I_\gamma$ defined in \eqref{Igamma} therefore contains $q_0^-=0$ and possibly some of the $q_j^+$, $j\ge1$, but none of the $q_j^-$ for $j\ge1$. The cone was assumed to be straight, and the space $\underline\scrE_0$ therefore consisted of functions locally constant near $\partial \B$, 
\begin{eqnarray}
\label{scrE0}
\underline\scrE_0 =\{ u\in C^\infty(\B): u(x,y) = \omega(x) e(y); e\in E_0\}.
\end{eqnarray}

The following is Theorem 1.1 in \cite{RS2}:

\begin{theorem}\label{old} 
For $p$ and $q$ sufficiently large and a strictly positive initial value $u_0$ in the real interpolation space $(\scrD(\underline \Delta), \cH^{s,\gamma}_p(\B))_{1/q,q}$, the PME \eqref{PME} has a unique solution 
$$u\in L^q(0,T,\scrD(\underline\Delta)) \cap W^1_q(0,T, \cH^{s,\gamma}_p(\B))\hookrightarrow
C([0,T], (\cH^{s,\gamma}_p(\B),\scrD(\underline \Delta))_{1-1/q,q})$$
for some $T>0$, where the last embedding follows from \cite[Theorem III.4.10.2]{Amann}.
\end{theorem}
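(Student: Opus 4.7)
The plan is to reformulate the PME as an abstract quasilinear Cauchy problem $\partial_{t}u+A(u)u=f(t,u)$, $u(0)=u_{0}$, in the Banach space $X_{0}=\cH^{s,\gamma}_{p}(\B)$ with constant domain $X_{1}=\scrD(\underline\Delta)$, and then apply the Cl\'ement--Li theorem on maximal $L^{q}$-regularity. For strictly positive $u$ we have the pointwise identity $\Delta(u^{m})=mu^{m-1}\Delta u+m(m-1)u^{m-2}|\nabla u|^{2}$, which suggests the choice
\begin{eqnarray*}
A(u)=-mu^{m-1}\underline\Delta,\qquad f(t,u)=m(m-1)u^{m-2}|\nabla u|^{2}+F(t,u).
\end{eqnarray*}
First I would choose $p$ and $q$ so large that the trace space $X_{\mathrm{tr}}=(X_{1},X_{0})_{1/q,q}$ embeds continuously into $C(\B)$ and that the relevant Mellin Sobolev scales are Banach algebras under pointwise multiplication. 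Since $u_{0}\in X_{\mathrm{tr}}$ is strictly positive and continuous on $\B$, it is bounded below by a positive constant, a property that persists on an open ball about $u_{0}$ in $X_{\mathrm{tr}}$, on which the nonlinearities are then well defined.

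The central step is to verify that $A(u_{0})$ has maximal $L^{q}$-regularity on $X_{0}$ with domain $X_{1}$. Recall from \eqref{dom_old} that $X_{1}=\cH^{s+2,\gamma+2}_{p}(\B)\oplus\underline\scrE_{0}$, where $\underline\scrE_{0}$ consists of functions of the form $\omega(x)e(y)$ with $e\in E_{0}$. One needs that multiplication by $u_{0}^{m-1}$ is a bounded isomorphism on $X_{0}$ which preserves $X_{1}$. For the regular summand $\cH^{s+2,\gamma+2}_{p}(\B)$ this is a Banach algebra property valid once $s$ is large enough. For the asymptotic summand it is crucial that the cone is straight and $E_{0}$ is the space of functions that are locally constant on $\partial\B$, so that the boundary trace of $u_{0}^{m-1}$ is itself locally constant and multiplication by it maps $\underline\scrE_{0}$ into itself modulo $\cH^{s+2,\gamma+2}_{p}(\B)$. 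Since a shift of $-\underline\Delta$ is known to be $R$-sectorial from earlier work, a standard multiplicative perturbation argument then shows that the same holds for a shift of $-A(u_{0})$, which yields maximal $L^{q}$-regularity.

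The remaining ingredients are the local Lipschitz estimates of $u\mapsto A(u)$ from $X_{\mathrm{tr}}$ into $\scrL(X_{1},X_{0})$ and of $u\mapsto f(t,u)$ from $X_{\mathrm{tr}}$ into $X_{0}$; these follow from the chain rule together with the Banach algebra structure of the spaces involved. Cl\'ement--Li then produces a unique solution $u\in L^{q}(0,T;X_{1})\cap W^{1}_{q}(0,T;X_{0})$ for some $T>0$. The hard part, in my view, is the invariance of the asymptotic subspace $\underline\scrE_{0}$ under multiplication by powers of positive elements of the trace space: this is precisely what forces the straight cone assumption and the restricted range of admissible weights $\gamma$ with $\max\{-2,q_{1}^{-}\}<(n+1)/2-\gamma-2<0$. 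More refined asymptotic subspaces involving the poles $q_{j}^{\pm}$ for $j\ge 1$ need not be invariant in this sense, and dealing with this obstruction is the very point of departure for the finer analysis carried out in the present paper.
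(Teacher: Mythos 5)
Your overall architecture --- rewriting the PME as $u'+A(u)u=f(t,u)$ with $A(u)=-mu^{m-1}\underline\Delta$, $f(t,u)=m(m-1)u^{m-2}\langle\nabla u,\nabla u\rangle_g+F(t,u)$, and invoking Cl\'ement--Li with $X_0=\cH^{s,\gamma}_p(\B)$, $X_1=\scrD(\underline\Delta)$ --- is exactly the route taken in \cite{RS2} for this theorem and reproduced in Section \ref{H1H2} for the refined version. Two of your intermediate steps, however, would fail as stated. First, you assert that one \emph{needs} multiplication by $u_0^{m-1}$ to preserve $X_1$. This is neither needed nor true: the trace space only embeds into $\cH^{s_0,\gamma_0}_p(\B)\oplus\underline\scrE_0$ with $s_0=s+2-2/q-\varepsilon<s+2$, so $u_0^{m-1}$ has strictly less Sobolev regularity than the elements of $X_1=\cH^{s+2,\gamma+2}_p(\B)\oplus\underline\scrE_0$ and cannot map that space into itself. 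Fortunately $\scrD(A(u_0))=X_1$ holds for free, since $u_0^{m-1}$ is an invertible multiplier on $X_0$ and hence $u_0^{m-1}\underline\Delta u\in X_0$ iff $\underline\Delta u\in X_0$. What is genuinely needed is that $u_0^{m-1}$ again lies in the algebra $\cH^{s_0,\gamma_0}_p(\B)\oplus\underline\scrE_0$ (spectral invariance in $C(\B)$, Proposition \ref{SI}) and acts boundedly on $X_0$ (Lemma \ref{mult}); your closing remark about ``invariance of $\underline\scrE_0$ under powers'' is the right intuition for why the constant asymptotic term is harmless, but the precise mechanism is spectral invariance of the trace space, not invariance of $X_1$.

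Second, the step from $R$-sectoriality of a shift of $\underline\Delta$ to that of $-A(u_0)$ is not a ``standard multiplicative perturbation argument'': composing an $R$-sectorial operator with a multiplication operator does not in general preserve $R$-sectoriality, and there is no small parameter to perturb in. The actual input is the stronger statement that $c-\underline\Delta$ admits a bounded $H^\infty$-calculus on sectors $\Lambda_\theta$ with $\theta<\pi/2$ (here obtained by verifying conditions (E1)--(E3) of \cite{SS2}, the bulk of Section \ref{HIDelta}), combined with the structure of the coefficient $u_0^{m-1}\in\cH^{s_0,\gamma_0}_p(\B)\oplus\underline\scrE_0$; this is the content of \cite[Theorem 6.1]{RS2}, and only then does Weis's theorem give maximal $L^q$-regularity. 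Likewise, the Lipschitz estimate for $u^{m-2}\langle\nabla u,\nabla u\rangle_g$ in (H2) is not a plain Banach-algebra/chain-rule estimate: $\nabla u$ lies only in $\cH^{s_0-1,\gamma_0-1}_p(\B)$ and the metric contributes a factor $x^{-2}$, so one must redistribute the weight between the two gradient factors (the $\widetilde x^{\pm\delta}$ device of Section \ref{H1H2}); this is precisely what produces the largeness conditions on $p$ and $q$ in the statement. With these two steps repaired, your outline matches the paper's proof.
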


It follows from \cite[Lemma 5.2]{RS1} (an independent proof will be given, below) that, for every $\varepsilon >0$, 
\begin{eqnarray}\label{embedding1}
\cH^{s+2-2/q+\varepsilon, \gamma+2-2/q+\varepsilon}_p(\B)+\underline{\scrE}_0& \hookrightarrow &
(\cH^{s,\gamma}_p(\B),\scrD(\underline \Delta))_{1-1/q,q},\\
&\hookrightarrow&
\cH^{s+2-2/q-\varepsilon, \gamma+2-2/q-\varepsilon}_p(\B)+\underline\scrE_0,\nonumber
\end{eqnarray}
where the sum is direct whenever $\gamma+2-2/q-\varepsilon>(n+1)/2$. 

In case $q_1^-\le-2$, this is an optimal result. We can choose $\gamma$ so that $\frac{n+1}2-\gamma-2$ is only slightly larger than $-2$ and conclude that the non-constant part of any solution to \eqref{PME} with $\scrD(\underline \Delta)$ given by \eqref{dom_old} belongs to $\cH_p^{s+2-2/q-\varepsilon,\gamma+2-2/q-\varepsilon}(\B)$ for any $\varepsilon>0$. for large $q$, this is almost two orders flatter than the constant part. 

\subsection{Main Results}\label{mainres}
Define $k$ as the largest index such that
\begin{eqnarray*}
q_{k+1}^-\le-2<q_k^-< \ldots <q_1^-<q_0^-=0.
\end{eqnarray*}
We shall assume that $q_1^->-2$, for else we will be back in the situation of Section \ref{Previous}. 
Choose $\gamma$ such that 
\begin{eqnarray}\label{gamma_new}
-2<\frac{n+1}2-\gamma-2 <q_k^-
\end{eqnarray}
and that $(n+1)/2-\gamma$ is not a pole of $\sigma_M(\Delta)^{-1}$.
The interval $I_\gamma$ defined in \eqref{Igamma} then contains $q_0^-,\ldots, q_k^-$ and possibly some of the $q_j^+$, $j\ge0$. Since $(n+1)/2-\gamma-2$ is not a pole of $\sigma_M(\Delta)^{-1}$, Theorem \ref{Dom} is applicable. 

\subsubsection{The choice of the closed extension}
We consider $\Delta$ as an unbounded operator in $\cH^{s,\gamma}_p(\B)$ for some $s\in\R$ and $1<p<\infty$ to be determined later on. We fix the extension $\underline \Delta$ of $\Delta$ with the domain
\begin{eqnarray}\label{domD}
\scrD(\underline\Delta) = \cH^{s+2,\gamma+2}_p(\B)\oplus \bigoplus_{j=1}^k \scrE_{q_j^-} \oplus \underline\scrE_0 . 
\end{eqnarray}
The asymptotics spaces $\scrE_{q_j^-}$, $j=0,\ldots, k$, are given explicitly in Lemmas \ref{Esigma}, \ref{nge2pole} and \ref{n1gammapos} in the Appendix; $\underline\scrE_0$ is as in \eqref{scrE0}. 
Then $\underline \scrE_0=\scrE_0$, if $n\ge2$, while $\underline \scrE_0$ is a proper subspace of $\scrE_0$ for $n=1$.

We will establish the following result: 

\begin{theorem}\label{main} 
Fix $\gamma$ as in \eqref{gamma_new}, and choose $1<p,q<\infty$ so large that 
\begin{eqnarray}\label{pq}
\frac{n+1}{p} + \frac2q<1\quad \text{ and }\quad \frac{n+1}2-\gamma-2 +\frac4q<0.
\end{eqnarray}
Moreover, choose $\scrD(\underline\Delta)$ as in \eqref{domD} with 
\begin{eqnarray}\label{s}
s>-1+\frac{n+1}p+\frac 2q. 
\end{eqnarray}
For any initial value $v_0$ in the interpolation space $(\cH^{s,\gamma}_p(\B), \scrD(\underline\Delta))_{1-1/q,q}$ that is strictly positive on $\B$, the porous medium equation \eqref{PMEAlt} 
$$ \partial_t v-mv^{(m-1)/m}\Delta v = G(t,v); \quad v(0)=v_0$$ 
with $v_0 \in (\cH^{s,\gamma}_p(\B), \scrD(\underline\Delta))_{1-1/q,q}$ and 
forcing term 
$$G\in C^{1-,1-}([0,T_0]\times U, \cH^{s,\gamma}_p(\B)),$$
where $T_0>0$ and $U$ is an open neighborhood of $v_0$ in 
$(\cH^{s,\gamma}_p(\B), \scrD(\underline\Delta))_{1-1/q,q}$, has a unique solution 
\begin{eqnarray}\label{solution}
v\in W^{1,q}(0,T; \cH^{s,\gamma}_p(\B)) \cap L^q(0,T, \scrD(\underline \Delta))
\end{eqnarray}
for suitable $0<T\le T_0$.
\end{theorem}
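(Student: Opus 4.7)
The plan is to write \eqref{PME}--\eqref{PME2} as an abstract quasilinear Cauchy problem
\[
\partial_t u + A(u)u = f(t,u), \qquad u(0)=u_0,
\]
on $\cH^{s,\gamma}_p(\B)$, where $A(u)v := -m\,u^{m-1}\,\underline\Delta v$ and
\[
f(t,u) := F(t,u) + m(m-1)\,u^{m-2}\,|\nabla u|^2_g,
\]
using the pointwise identity $\Delta(u^m) = m u^{m-1}\Delta u + m(m-1)u^{m-2}|\nabla u|^2_g$ valid for strictly positive $u$. Once this is done, the Cl\'ement--Li existence theorem \cite{CL} yields the desired short-time solution in the class \eqref{solution} provided the following two conditions hold: \emph{(i)} $A(u_0)$ has maximal $L^q$-regularity on $\cH^{s,\gamma}_p(\B)$ with domain $\scrD(\underline\Delta)$ as in \eqref{domD}; \emph{(ii)} the assignments $u\mapsto A(u)\in\scrL(\scrD(\underline\Delta),\cH^{s,\gamma}_p(\B))$ and $u\mapsto f(t,u)\in\cH^{s,\gamma}_p(\B)$ are locally Lipschitz in a neighborhood of $u_0$ in the interpolation space $X_{1-1/q,q} := (\cH^{s,\gamma}_p(\B),\scrD(\underline\Delta))_{1-1/q,q}$. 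This follows the same general strategy as \cite{RS2}, now executed with the richer domain \eqref{domD}.

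For \emph{(i)}, the first step is to show that $-\underline\Delta$ admits a bounded $H^\infty$-calculus, hence maximal $L^q$-regularity, on $\cH^{s,\gamma}_p(\B)$ with the domain \eqref{domD}. I would do this by adapting the resolvent construction of \cite{RS1, SS2} to the present extension, exploiting the explicit partial fraction representation \eqref{inverse} of $\sigma_M(\Delta)^{-1}$ together with the explicit descriptions \eqref{scrEqsimpl}--\eqref{scrE0} of the new asymptotic summands; the crucial check is that the additional poles $q_1^-,\dots,q_k^-$ of $\sigma_M(\Delta)^{-1}$ in $I_\gamma$ give $R$-bounded contributions to the resolvent. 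Conditions \eqref{pq} and \eqref{s} together with \eqref{embedding1} then force $X_{1-1/q,q}$ to embed into the continuous functions on $\B$, so that $u_0$ (and every $u$ in a small neighborhood of it) is bounded below by a positive constant and $u_0^{m-1}$ is a bounded, strictly positive coefficient whose asymptotic expansion at the tip has the same general structure as that of $u_0$. The cone differential operator calculus of \cite{RS1, SS2} then applies to $A(u_0)=m\,u_0^{m-1}(-\underline\Delta)$ as a zero-order perturbation of $-\underline\Delta$ by a positive scalar multiplication operator, and $A(u_0)$ inherits the bounded $H^\infty$-calculus, and in particular maximal $L^q$-regularity.

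For \emph{(ii)}, the core input is a set of multiplication estimates on weighted Mellin Sobolev spaces. The embedding \eqref{embedding1} combined with \eqref{pq} and \eqref{s} ensures that $X_{1-1/q,q}$ embeds into an algebra contained in $L^\infty(\B)$; hence the map $u\mapsto u^{m-1}$ is locally Lipschitz from a strictly positive neighborhood of $u_0$ in $X_{1-1/q,q}$ into a space of bounded coefficients acting by multiplication as a bounded operator $\scrD(\underline\Delta)\to\cH^{s,\gamma}_p(\B)$, which yields the Lipschitz dependence of $A(\cdot)$. The Lipschitz dependence of $f(t,\cdot)$ reduces to that of $F$ (which is given by hypothesis) together with multiplicative estimates for $u^{m-2}|\nabla u|^2_g$. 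Writing the latter in the conic coordinates \eqref{metric} produces the weighted quantity $x^{-2}\bigl((x\partial_x u)^2 + h(x)^{ij}\partial_{y_i}u\,\partial_{y_j}u\bigr)$, which by the embedding properties of $X_{1-1/q,q}$ lies in the correct weighted Mellin space, and standard superposition and product estimates close the argument.

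The main obstacle is the maximal regularity assertion in \emph{(i)}: extending the bounded $H^\infty$-calculus of $\underline\Delta$ from the simpler domain \eqref{dom_old} used in \cite{RS2} to the enlarged domain \eqref{domD} that now also contains the asymptotic spaces $\scrE_{q_j^-}$, $j=1,\dots, k$, associated with the first $k$ nonzero eigenvalues of $\Delta_{h(0)}$. This requires a delicate analysis of how the resolvent $(\underline\Delta-\lambda)^{-1}$ acts on these asymptotic components, using the meromorphic structure of $\sigma_M(\Delta)^{-1}$ and the extension theory summarized in Theorem \ref{Dom}. Once this is in hand, the remainder of the argument proceeds along the well-trodden path of \cite{RS2}, the new feature being only the additional bookkeeping required by the extra asymptotic summands.
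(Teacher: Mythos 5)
Your overall architecture coincides with the paper's: reduce to the quasilinear Cauchy problem $u'+A(u)u=f(t,u)$ with $A(u)=-mu^{m-1}\underline\Delta$, apply Cl\'ement--Li, and split the work into maximal regularity for $A(u_0)$ and Lipschitz estimates for $A(\cdot)$ and $f(t,\cdot)$. However, the step you yourself identify as the main obstacle --- the bounded $H^\infty$-calculus for $\underline\Delta$ with the enlarged domain \eqref{domD} --- is only named, not resolved. Saying that ``the crucial check is that the additional poles $q_1^-,\dots,q_k^-$ give $R$-bounded contributions to the resolvent'' restates the problem rather than solving it. The paper does not redo the resolvent construction by hand; it invokes the criterion of \cite[Theorem 5.2]{SS2} and reduces condition (E3) to the algebraic orthogonality conditions of \cite[Theorem 6.5]{SS2} on the asymptotic spaces of the model cone operator: over each $q_j^-$ in $I_\gamma\cap I_{-\gamma}$ one takes the full space $\widehat\scrE_{q_j^-}$ and over the mirror point $q_j^+$ the zero space, so that $\widehat{\underline\scrE}_{q}^\perp=\widehat\scrE_{n-1-q}$, with a separate discussion of $q_0^\pm$ for $n=1$ versus $n\ge2$ and a duality argument for $s<0$. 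This verification is exactly why the specific choice \eqref{domD} works and cannot be waved through.

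A second, genuinely problematic step is your claim that $A(u_0)=mu_0^{m-1}(-\underline\Delta)$ ``inherits the bounded $H^\infty$-calculus'' as ``a zero-order perturbation of $-\underline\Delta$ by a positive scalar multiplication operator.'' Multiplication by a positive variable coefficient is a multiplicative, not additive, perturbation, and it does not in general preserve the bounded $H^\infty$-calculus (nor even sectoriality) without further argument. The paper instead establishes only $R$-sectoriality of $c-mu^{m-1}\underline\Delta$ via the perturbation result \cite[Theorem 6.1]{RS2}, whose hypothesis is that $mu^{m-1}$ lies in $\cH^{s_0,\gamma_0}_p(\B)\oplus\underline\scrE_0$ with $\gamma_0>(n+1)/2$; this in turn requires the spectral invariance of that space in $C(\B)$ (Proposition \ref{SI}) applied to the function $t\mapsto t^{m-1}$ on the range of $u$. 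Maximal regularity then follows from Weis's theorem since the angle is below $\pi/2$. Relatedly, for the term $u^{m-2}\langle\nabla u,\nabla u\rangle_g$ your appeal to ``standard product estimates'' skips the point of the second condition in \eqref{pq}: one must split the weights as $\langle\widetilde x^{\delta}\nabla v_1,\widetilde x^{-\delta}\nabla v_2\rangle_g$ with $\delta=1-2/q-\varepsilon$ so that one factor lands in $\cH^{s_0-1,(n+1)/2}_p(\B)$ and the other in $\cH^{s,\gamma}_p(\B)$, which is precisely what $\frac{n+1}{2}-\gamma-2+\frac4q<0$ guarantees; without this the product need not lie in $\cH^{s,\gamma}_p(\B)$.
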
 
Recall that $n+1=\dim(\B)$. Since $\frac{n+1}2-\gamma-2<0$ by \eqref{gamma_new}, condition \eqref{pq} on $q$ can always be fulfilled. 
Moreover, we have the following extension of \eqref{embedding1}:

\begin{lemma}\label{interpolation}Let $\gamma$ be as in \eqref{gamma_new}. 
For every $\varepsilon>0$ we have continuous and dense embeddings
\begin{eqnarray}\label{embedding2}
\cH^{s+2-2/q+\varepsilon, \gamma+2-2/q+\varepsilon}_p(\B) &+&\bigoplus_{j=1}^k \scrE_{q_j^-}+\underline{\scrE}_0 \hookrightarrow 
(\cH^{s,\gamma}_p(\B), \scrD(\underline\Delta))_{1-1/q,q}\nonumber\\
&\hookrightarrow&
\cH^{s+2-2/q-\varepsilon, \gamma+2-2/q-\varepsilon}_p(\B)+ \bigoplus_{j=1}^k \scrE_{q_j^-}\oplus \underline\scrE_0.
\end{eqnarray}

{\rm(i)} The sum on the right hand side is direct when $\frac{n+1}2-\gamma-2+\frac2q +\varepsilon <q_k^-$, which can be achieved in view of \eqref{gamma_new} by taking $q$ large and $\varepsilon$ small. 

{\rm(ii)} For general $q$ we find an index $0\le r\le k$ such that 
$$\max\{-2,q_{r+1}^-\} <\frac{n+1}2-\gamma-2+\frac2q+\varepsilon< q_r^-$$ for all sufficiently small $\varepsilon>0$. Then the right hand side is 
$$\cH^{s+2-2/q-\varepsilon, \gamma+2-2/q-\varepsilon}_p(\B)\oplus \bigoplus_{j=1}^r \scrE_{q_j^-}\oplus\underline\scrE_0.$$
\end{lemma}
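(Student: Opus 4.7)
Let me abbreviate $X_0 = \cH^{s,\gamma}_p(\B)$, $Z = \cH^{s+2,\gamma+2}_p(\B)$ and $Y = \bigoplus_{j=1}^k \scrE_{q_j^-} \oplus \underline\scrE_0$, so that $X_1 := \scrD(\underline\Delta) = Z \oplus Y$ with $\dim Y < \infty$. The first ingredient is the interpolation identity for the Mellin scale: for every $\varepsilon > 0$,
$$\cH^{s+2-2/q+\varepsilon, \gamma+2-2/q+\varepsilon}_p(\B) \hookrightarrow (X_0, Z)_{1-1/q, q} \hookrightarrow \cH^{s+2-2/q-\varepsilon, \gamma+2-2/q-\varepsilon}_p(\B),$$
which is \cite[Lemma 5.2]{RS1}. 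I would reprove it independently by combining the change of variable $x = e^{-r}$ with the weight rescaling $u \mapsto e^{r((n+1)/2 - \gamma)} u$, which identifies the collar part of the Mellin scale with the ordinary Bessel potential scale on the cylinder $\R \times \partial \B$, and then invoking the standard identity $(H^s_p, H^{s+2}_p)_{1-1/q, q} = B^{s+2-2/q}_{p,q}$ together with the sandwich $B^{s_0}_{p,q} \hookrightarrow B^{s_1}_{p,q} \hookrightarrow H^{s_2}_p$ for $s_0 > s_1 > s_2$. The part of $\B$ away from the singularity is handled by the usual interpolation on closed manifolds.

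The \emph{left} embedding of \eqref{embedding2} follows from two observations: $Y$ is a finite-dimensional subspace of $X_0 \cap X_1$ and therefore embeds into every real interpolation space between $X_0$ and $X_1$; and the inclusion $Z \hookrightarrow X_1$ together with monotonicity of real interpolation yields $(X_0, Z)_{1-1/q, q} \hookrightarrow (X_0, X_1)_{1-1/q, q}$. For the \emph{right} embedding the heart of the matter is to extend the algebraic projection $\pi: X_1 \to Y$ along $Z$ to a bounded map $\tilde\pi$ on the interpolation space. The scalar components of $\pi$ are Mellin residues: writing $\pi u = \omega(x) \bigl(\sum_{j=1}^k x^{-q_j^-} c_j(y) + c_0(y)\bigr)$ with $c_j \in E_j$, the coefficient $c_j$ is the residue at $z = q_j^-$ of the Mellin transform of $\omega u$. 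These residue functionals extend boundedly to any $\cH^{0, \sigma}_p(\B)$ with $\sigma > (n+1)/2 - q_j^-$. Provided the left embedding just proved places $(X_0, X_1)_{1-1/q,q}$ into such a Mellin space up to a $Y$-correction --- which, under the hypothesis of (i), happens simultaneously for all $j = 1, \ldots, k$ --- $\tilde\pi$ is well-defined and bounded, and the decomposition $u = \tilde\pi u + (u - \tilde\pi u)$ with $u - \tilde\pi u \in (X_0, Z)_{1-1/q, q} \hookrightarrow \cH^{s+2-2/q-\varepsilon, \gamma+2-2/q-\varepsilon}_p$ gives the right embedding. In the setting of (ii) only those $\rho_{j,\ell}$ with $j \le r$ extend; the residues at $q_j^-$ for $j > r$ are not needed because the corresponding asymptotic components are absorbed into the Mellin term.

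Parts (i) and (ii) are then a book-keeping exercise based on the elementary Mellin criterion $\omega(x) x^{-q_j^-} e(y) \in \cH^{0, \sigma}_p(\B)$ iff $\sigma < (n+1)/2 - q_j^-$, i.e.\ $\scrE_{q_j^-} \cap \cH^{s+2-2/q-\varepsilon, \gamma+2-2/q-\varepsilon}_p = \{0\}$ iff $(n+1)/2 - \gamma - 2 + 2/q + \varepsilon \le q_j^-$. Under the hypothesis of (i) this holds for every $j \le k$; in case (ii) it holds precisely for $j \le r$, and the remaining $\scrE_{q_j^-}$, $j > r$, are already inside the Mellin factor. Density of each embedding in \eqref{embedding2} follows from density of $X_1 = X_0 \cap X_1$ in $(X_0, X_1)_{1-1/q, q}$ for $q < \infty$, together with $X_1 \subset \cH^{s+2-2/q+\varepsilon, \gamma+2-2/q+\varepsilon}_p + Y$. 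The principal obstacle is the bounded extension of $\pi$: whereas $\pi$ is trivially continuous in the graph norm of $X_1$, promoting it to the (weaker) interpolation-space norm is a quantitative Mellin-residue statement, and it is exactly this threshold that forces the arbitrarily small loss $\varepsilon$ on both sides of \eqref{embedding2}.
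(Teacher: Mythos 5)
Your left embedding and the bookkeeping in (i)--(ii) are sound and match the paper's argument: the scale interpolation $\cH^{s+2-2/q+\varepsilon,\gamma+2-2/q+\varepsilon}_p\hookrightarrow(\cH^{s,\gamma}_p,\cH^{s+2,\gamma+2}_p)_{1-1/q,q}\hookrightarrow\cH^{s+2-2/q-\varepsilon,\gamma+2-2/q-\varepsilon}_p$ is quoted there from \cite[Lemma 5.4]{CSS1}, the finite-dimensional asymptotic spaces sit in $X_0\cap X_1$, and the directness criterion you use is exactly \eqref{qjinhsg}. The problem is the right embedding. Your plan is to extend the projection $\pi:X_1\to Y$ to the interpolation space via Mellin residue functionals at the $q_j^-$, and you condition this on the phrase ``provided the left embedding just proved places $(X_0,X_1)_{1-1/q,q}$ into such a Mellin space up to a $Y$-correction.'' The left embedding gives no such information: it maps a space \emph{into} the interpolation space, not the other way around, and the containment of $(X_0,X_1)_{1-1/q,q}$ in a Mellin space plus finitely many asymptotic terms is precisely the assertion to be proved. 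Concretely, for a general $u\in X_0=\cH^{s,\gamma}_p(\B)$ the Mellin transform $\scrM(\omega u)$ is holomorphic only in $\re z>(n+1)/2-\gamma$, while $q_j^-<(n+1)/2-\gamma$ by \eqref{Igamma}; so the residue functional at $q_j^-$ is not even defined on $X_0$, and its boundedness in the interpolation norm cannot be asserted without first knowing that elements of the interpolation space admit a meromorphic Mellin continuation across the relevant strip. At the decisive step your argument presupposes its conclusion.

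The paper closes exactly this gap by a different mechanism: from $\underline\Delta\bigl((X_0,X_1)_{1-1/q,q}\bigr)\hookrightarrow(\cH^{s-2,\gamma-2}_p(\B),\cH^{s,\gamma}_p(\B))_{1-1/q,q}\hookrightarrow\cH^{s-2/q-\varepsilon,\gamma-2/q-\varepsilon}_p(\B)$, together with $(X_0,X_1)_{1-1/q,q}\hookrightarrow\cH^{s-2/q-\varepsilon,\gamma-2/q-\varepsilon}_p(\B)$, one sees that the interpolation space lies in the \emph{maximal domain} of $\Delta$ acting in the shifted base space $\cH^{s-2/q-\varepsilon,\gamma-2/q-\varepsilon}_p(\B)$; Theorem \ref{Dom} identifies that maximal domain explicitly as a Mellin space plus the asymptotic spaces over $I_{\gamma-2/q-\varepsilon}$, and the density of $\scrD(\underline\Delta)$ in the interpolation space then pares the asymptotic summands down to those of $\scrD(\underline\Delta)$. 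If you want to keep your residue-projection picture, you must insert this maximal-domain step (or an equivalent a priori containment) before invoking the residues; as written, the proof of the right embedding is circular.
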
 
Together with the embedding \cite[Theorem III.4.10.2]{Amann} we see 

\begin{corollary}If $q$ is so large that $-2<(n+1)/2-\gamma-2+2/q <q_k^-$, any solution $v$ of the PME in Theorem \ref{main} will satisfy 
\begin{eqnarray*}
v\in C\Big([0,T], \cH^{s+2-2/q-\varepsilon, \gamma+2-2/q-\varepsilon}_p(\B)\oplus \bigoplus_{j=1}^k \scrE_{q_j^-}\oplus\underline\scrE_0\Big)
\end{eqnarray*} 
for any $\varepsilon>0$ sufficiently small. 

For general $q$ we obtain partial asymptotics in the sense of Lemma {\rm \ref{interpolation}\,(ii)}.

\end{corollary}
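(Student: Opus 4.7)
The approach is to chain three ingredients together: the existence statement of Theorem \ref{main}, the trace embedding \eqref{embedding} due to Amann, and the characterization of the real interpolation space furnished by Lemma \ref{interpolation}. Theorem \ref{main} produces a unique solution
$$u\in W^{1,q}(0,T;\cH^{s,\gamma}_p(\B))\cap L^q(0,T;\scrD(\underline\Delta)),$$
and \eqref{embedding} places this solution into $C([0,T],(\cH^{s,\gamma}_p(\B),\scrD(\underline\Delta))_{1-1/q,q})$. Only the identification of the interpolation space with a concrete sum of a cone Sobolev space and asymptotic summands remains to be carried out, and this is precisely the content of Lemma \ref{interpolation}.

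Under the hypothesis $-2<(n+1)/2-\gamma-2+2/q<q_k^-$, part (i) of Lemma \ref{interpolation} applies: for every sufficiently small $\varepsilon>0$ the sum on the right-hand side of \eqref{embedding2} is direct, and the interpolation space embeds continuously into
$$\cH^{s+2-2/q-\varepsilon,\gamma+2-2/q-\varepsilon}_p(\B)\oplus\bigoplus_{j=1}^k\scrE_{q_j^-}\oplus\underline\scrE_0.$$
Composing with the Amann embedding gives the desired continuity of $u$ on $[0,T]$. Both the conditions \eqref{pq} of Theorem \ref{main} and the hypothesis of the corollary on $q$ can be met simultaneously by taking $q$ sufficiently large, thanks to the choice \eqref{gamma_new} of $\gamma$ which ensures $\tfrac{n+1}{2}-\gamma-2<q_k^-<0$.

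For the statement about $u_0=1+v_0$, note that constants lie in $E_0$, so $1\in\underline\scrE_0$, while $v_0\in\cH^{s+2,\gamma+2}_p(\B)$ lies in the minimal domain; therefore $u_0\in\scrD(\underline\Delta)\hookrightarrow(\cH^{s,\gamma}_p(\B),\scrD(\underline\Delta))_{1-1/q,q}$, so that the hypothesis of Theorem \ref{main} is met (up to the unchanged strict positivity requirement on $u_0$). By the first part of the corollary, at each $t\in[0,T]$ the solution $u(t)$ admits a unique decomposition into a remainder in the cone Sobolev space $\cH^{s+2-2/q-\varepsilon,\gamma+2-2/q-\varepsilon}_p(\B)$ plus terms in $\scrE_{q_j^-}$, $j=1,\dots,k$, and in $\underline\scrE_0$. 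By \eqref{scrEqsimpl} and \eqref{scrE0} these last summands only admit the asymptotics $\omega(x)x^{-q_j^-}c_j(y)$ with $c_j\in E_j$, $j=0,\dots,k$; no other asymptotic rates may appear.

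For arbitrary $q$ satisfying only the assumptions of Theorem \ref{main}, one invokes Lemma \ref{interpolation}(ii) in place of (i), yielding the same chain of embeddings but with $\bigoplus_{j=1}^k$ replaced by $\bigoplus_{j=1}^r$ for the appropriate index $r\le k$, hence partial asymptotics. No real obstacle arises: the proof is essentially bookkeeping of the direct-sum decompositions supplied by Lemma \ref{interpolation}; the only mild subtlety is keeping $r$ and $\varepsilon$ compatible, which is already built into the statement of part (ii).
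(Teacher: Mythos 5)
Your proposal is correct and follows exactly the route the paper takes: the corollary is obtained by composing the maximal regularity solution of Theorem \ref{main} with the Amann embedding \eqref{embedding} and then identifying the interpolation space via Lemma \ref{interpolation} (part (i) for large $q$, part (ii) in general). The only cosmetic imprecision is the claim that $1\in\underline\scrE_0$; strictly, $1=\omega\cdot 1+(1-\omega)\cdot 1$ with the second summand in $\cH^{s+2,\gamma+2}_p(\B)$, so $1\in\cH^{s+2,\gamma+2}_p(\B)+\underline\scrE_0\subseteq\scrD(\underline\Delta)$, which is all that is needed.
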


Recall, however, that $v$ in Theorem \ref{main} is the solution to the alternative form of the porous medium equation \eqref{PMEAlt} and that we are actually interested in the asymptotics of $u=v^{1/m}$ solving \eqref{PME}. In order to understand these we introduce a new Banach algebra adapted to the situation in Lemma \ref{interpolation} (ii).

\begin{definition}
\label{calB}
For $1<p<\infty$ let $\underline s>(n+1)/p$, $\underline \gamma>(n+1)/2$ and $r\in \{0,\ldots,k\}$
such that 
\begin{eqnarray*}
\max\{-2,q_{r+1}^-\} <\frac{n+1}2-\underline \gamma< q_r^-.
\end{eqnarray*}
Recall that the elements of $\cH^{\underline s, \underline\gamma}_p(\B)$ as well as those of the spaces $\scrE_{q_j^-}$, $1\le j\le r$, and those of $\underline\scrE_0$ are continuous on $\B$. 
Denote by $\scrB$ the subalgebra of $C(\B)$ generated by the functions in $\cH^{\underline s, \underline\gamma}_p(\B)$, $\scrE_{q_j^-}$, $1\le j\le r$, and $\underline\scrE_0$.
\end{definition}

\begin{theorem}\label{SI2}
$\scrB$ is a spectrally invariant Banach subalgebra of $C(\B)$. 
In particular, $\scrB$ is closed under holomorphic functional calculus. The quotient $\scrB/\cH^{\underline s,\underline\gamma}_p(\B)$ is finite dimensional. 
\end{theorem} 
Spectral invariance means that whenever a function $w\in \scrB$ has an inverse in $C(\B)$, the inverse belongs to $\scrB$. The closedness under holomorphic functional calculus follows from the continuity of inversion in Banach algebras and Cauchy's integral formula. 
 
\section{Proofs}
We assume that we are in the setting outlined in Section \ref{mainres}, in particular, $\gamma$ is chosen as in \eqref{gamma_new}, and $\underline \Delta$ is the extension of the Laplacian with domain \eqref{domD}.
\subsection{Proof of Lemma \ref{interpolation}}
Let $s\in \R$, $1<p,q<\infty$. 
According to \cite[Lemma 5.4]{CSS1}, the embeddings
\begin{eqnarray*}
\cH^{s+2-2/q+\varepsilon, \gamma+2-2/q+\varepsilon}_p(\B) \hookrightarrow (\cH^{s,\gamma}_p(\B), \cH^{s+2,\gamma+2}_p(\B))_{1-1/q,q}\hookrightarrow \cH^{s+2-2/q-\varepsilon, \gamma+2-2/q-\varepsilon}_p(\B) 
\end{eqnarray*}
are continuous for all $\varepsilon>0$; they have dense range, since $\cH^{s+2, \gamma+2}_p(\B)$ is dense in all spaces. Moreover, the spaces $\scrE_{q_j^-}$, $j=1, \ldots, k$, and $\underline \scrE_0$ are contained in $\cH_{p}^{s,\gamma}(\B)\cap \scrD(\underline\Delta)$. Hence we have a continuous and dense embedding 
 \begin{eqnarray*}
\cH^{s+2-2/q+\varepsilon, \gamma+2-2/q+\varepsilon}_p(\B)+ \bigoplus_{1\le j\le k} \scrE_{q_j^-}
+ \underline \scrE_0 \hookrightarrow (\cH^{s,\gamma}_p(\B), \scrD(\underline\Delta))_{1-1/q,q}.
\end{eqnarray*}
To see the converse direction, we first note that $\scrD(\underline\Delta)\subseteq \scrD(\Delta_{\max})$ as an unbounded operator in $\cH^{s,\gamma}_p(\B)$, and therefore $\Delta(\scrD(\underline\Delta)) \subseteq \cH^{s,\gamma}_p(\B)$. We conclude that 
\begin{eqnarray*}
\Delta((\cH^{s,\gamma}_p(\B), \scrD(\underline\Delta))_{1-1/q,q}) 
\hookrightarrow (\cH^{s-2,\gamma-2}_p(\B), \cH^{s,\gamma}_p(\B))_{1-1/q,q}\hookrightarrow 
\cH^{s-2/q-\varepsilon,\gamma-2/q-\varepsilon}_p(\B).
\end{eqnarray*}
Moreover, 
$$
(\cH^{s,\gamma}_p(\B), \scrD(\underline\Delta))_{1-1/q,q}\hookrightarrow \cH^{s,\gamma}_p(\B) \hookrightarrow \cH^{s-2/q-\varepsilon,\gamma-2/q-\varepsilon}_p(\B).
$$
Hence $(\cH^{s,\gamma}_p(\B), \scrD(\underline\Delta))_{1-1/q,q}$ is a subset of the maximal domain of $\Delta$, considered as an unbounded operator in $\cH^{s-2/q-\varepsilon,\gamma-2/q-\varepsilon}_p(\B)$.
Assuming that $\frac{n+1}2-\gamma+\frac2q+\varepsilon$ is not a pole of $\sigma_M(\Delta)^{-1}$ (otherwise change $\varepsilon$ slightly), this maximal domain is 
\begin{eqnarray*}
\cH^{s+2-2/q-\varepsilon,\gamma+2-2/q-\varepsilon}_p(\B)\oplus \bigoplus_q \scrE_q,
\end{eqnarray*}
where the direct sum is over all $q_j^\pm$ in the interval $I_{\gamma-2/q-\varepsilon}$. 
On the other hand we know from interpolation theory that $\scrD(\underline\Delta)$ is dense in the interpolation space 
$(\scrD(\underline\Delta),\cH^{s,\gamma}_p(\B))_{1/q,q}$. Hence 
\begin{eqnarray*}
(\cH^{s,\gamma}_p(\B), \scrD(\underline\Delta))_{1-1/q,q}\hookrightarrow
\cH^{s+2-2/q-\varepsilon,\gamma+2-2/q-\varepsilon}_p(\B)
+ \bigoplus\scrE_{q_j^-} + \underline \scrE_0,
\end{eqnarray*}
where the direct sum is over those $q_j^-$, $j=1,\ldots,k$, that lie in the interval $I_{\gamma-2/q-\varepsilon}$.

To see the directness of the sum we infer from \eqref{hsg} that the space $\scrE_{q_j^-}$ is contained in $\cH^{\infty,\sigma}_p(\B)$ if and only if 
\begin{eqnarray}
\label{qjinhsg}
q_j^-<\frac{n+1}2-\sigma.
\end{eqnarray}

\subsection{Outline of the proof of the main theorem.}
An important tool in the proof of Theorem \ref{main} will be the following result by Cl\'ement and Li \cite{CL}: 

\begin{theorem}\label{CL} Let $X_1\hookrightarrow X_0$ be Banach spaces, $X_1$ dense in $X_0$, $1<q<\infty$, $T_0>0$. 
In $L^{q}(0,T_{0};X_{0})$ consider the quasilinear parabolic problem 
\begin{gather}\label{QL}
\partial_tv(t)+A(v(t))v(t)=f(t,v(t))+g(t), \quad t\in(0,T_{0}),\quad u(0)=u_{0}.
\end{gather}

Assume that there exists an open neighborhood $U$ of $v_0$ in the real interpolation space $(X_0,X_1)_{1-1/q,q}$ such that $A(v_0): X_{1}\rightarrow X_{0}$ has maximal $L^{q}$-regularity and 
\begin{itemize}
\item[(H1)] $A\in C^{1-}(U, \scrL(X_1,X_0))$, 
\item[(H2)] $f\in C^{1-,1-}([0,T_0]\times U, X_0)$,
\item[(H3)] $g\in L^q(0,T_0; X_0)$.
\end{itemize}
Then there exist a $T>0$ and a unique solution $v\in W^1_q(0,T;X_0) \cap L^q(0,T;X_1)$ to Equation \eqref{QL} on $(0,T)$.
In particular, $u\in C([0,T];(X_0,X_1)_{1-1/q,q})$ by \cite[Theorem III.4.10.2]{Amann}. 
\end{theorem}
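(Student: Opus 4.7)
The plan is to apply the standard quasilinear perturbation strategy: freeze the operator at the initial value, exploit its maximal $L^q$-regularity to invert the linearized Cauchy problem, and then solve the full nonlinear equation by a Banach fixed-point argument on a short time interval.

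First I would set $A_0:=A(u_0)$, which by hypothesis has maximal $L^q$-regularity on $(0,T_0)$. Write $\mathbb E_T:=W^{1,q}(0,T;X_0)\cap L^q(0,T;X_1)$ with its graph norm, and recall the continuous embedding $\mathbb E_T\hookrightarrow C([0,T];X_{1-1/q,q})$. Let $u_*\in \mathbb E_{T_0}$ be the solution of the purely linear Cauchy problem $u_*'+A_0u_*=0$, $u_*(0)=u_0$. By the embedding, $u_*(t)\to u_0$ in $X_{1-1/q,q}$ as $t\to 0^+$, so for any $\rho>0$ small enough that the ball $\overline B_\rho(u_0)\subset U$, there is $T_1\in(0,T_0]$ with $u_*(t)\in\overline B_{\rho/2}(u_0)$ for all $t\in[0,T_1]$.

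Next, on the set
$$\Sigma_{T,R}:=\bigl\{v\in\mathbb E_T : v(0)=u_0,\ \|v-u_*\|_{\mathbb E_T}\le R\bigr\},\qquad T\le T_1,$$
I would define a solution map $\Phi$ by letting $\Phi(v)$ be the unique $u\in\mathbb E_T$ solving
$$u'(t)+A_0 u(t)=\bigl[A_0-A(v(t))\bigr]v(t)+f(t,v(t))+g(t),\qquad u(0)=u_0.$$
The right-hand side lies in $L^q(0,T;X_0)$: the first term because $A(v(t))\in\scrL(X_1,X_0)$ by (H1) and $v\in L^q(0,T;X_1)$, the second by (H2) combined with $v\in C([0,T];X_{1-1/q,q})\subset U$, and the third by (H3); maximal regularity of $A_0$ then delivers $u\in\mathbb E_T$ together with a bound of $\|u\|_{\mathbb E_T}$ in terms of the $L^q(X_0)$-norm of the right-hand side (with a constant independent of $T\le T_0$ provided one solves on $[0,T_0]$ and restricts). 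The self-mapping property $\Phi(\Sigma_{T,R})\subset\Sigma_{T,R}$ and the contraction estimate $\|\Phi(v_1)-\Phi(v_2)\|_{\mathbb E_T}\le\tfrac12\|v_1-v_2\|_{\mathbb E_T}$ then reduce, via this bound, to Lipschitz estimates on the three right-hand side terms; by (H1), $\|A_0-A(v(t))\|_{\scrL(X_1,X_0)}$ is made as small as desired by choosing $T$ and $R$ small, and (H2) gives a corresponding bound on $f(\cdot,v_1)-f(\cdot,v_2)$. The Banach fixed-point theorem produces the unique solution in $\Sigma_{T,R}$; uniqueness in the larger class $\mathbb E_T$ follows from the same contraction estimate applied to the difference of two hypothetical solutions (a small-time Gronwall/absorption argument).

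The main obstacle is the genuinely quasilinear term $[A_0-A(v)]v$: it couples the smallness of $A_0-A(v)$, which one obtains only through the continuity of $A$ on $X_{1-1/q,q}$, with the $L^q(X_1)$-norm of $v$, which is not small for small $T$. The key point is therefore to estimate $\|[A_0-A(v)]v\|_{L^q(0,T;X_0)}$ by $\sup_{t\in[0,T]}\|A_0-A(v(t))\|_{\scrL(X_1,X_0)}\,\|v\|_{L^q(0,T;X_1)}$ and to control the supremum via the embedding into $C([0,T];X_{1-1/q,q})$ together with the uniform continuity of $A$ on the ball $\overline B_\rho(u_0)$. Once this interplay between Lipschitz continuity in the trace space and boundedness in the maximal regularity space is organised, combining it with the $T$-uniform maximal regularity constant for $A_0$ yields both the invariance and the contraction properties for small enough $T$ and $R$.
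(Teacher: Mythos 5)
This statement is not proved in the paper at all: it is the Cl\'ement--Li theorem, quoted verbatim from the reference \cite{CL} and used as a black box to treat the porous medium equation. So there is no in-paper argument to compare against; the relevant comparison is with the original proof of Cl\'ement and Li, and your sketch follows essentially that standard route (freeze the operator at $u_0$, use its maximal $L^q$-regularity to invert the linear problem, and run a Banach fixed-point argument on a short time interval). Your outline is sound, and you correctly identify the crux, namely the estimate of the quasilinear commutator term $[A_0-A(v)]v$ through the splitting of smallness between $\sup_t\|A_0-A(v(t))\|_{\scrL(X_1,X_0)}$ (controlled via the trace-space embedding and (H1)) and $\|v\|_{L^q(0,T;X_1)}$. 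Two details you gloss over but would need to make explicit in a full write-up: first, the embedding constant of $W^{1,q}(0,T;X_0)\cap L^q(0,T;X_1)\hookrightarrow C([0,T];X_{1-1/q,q})$ degenerates as $T\to0$ unless one restricts to differences of functions with the same initial trace, which is exactly why your set $\Sigma_{T,R}$ must fix $v(0)=u_0$ and why the contraction estimate must be applied only to such differences; second, in the contraction step the term $[A(v_2)-A(v_1)]v_1$ carries the factor $\|v_1\|_{L^q(0,T;X_1)}\le\|u_*\|_{L^q(0,T;X_1)}+R$, which is small only because $\|u_*\|_{L^q(0,T;X_1)}\to0$ as $T\to0$ and because $R$ is chosen small --- this should be stated rather than left implicit. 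With these points spelled out, your argument reproduces the known proof of the theorem.
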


This applies to our problem \eqref{PMEAlt} with the choices $X_0 = \cH^{s,\gamma}_p(\B)$, $X_1= \scrD(\underline\Delta)$ and $A(v) = -mv^{(m-1)/m} \underline \Delta$ together with the data given in Theorem \ref{main}.

Of central importance in Theorem \ref{CL} is the maximal regularity of $A(v_0)$; see Definition \ref{MaxReg}. In order to establish it, we will first show in the following Section \ref{HIDelta} that $c-\underline\Delta$ has a bounded $H^\infty$-calculus in $\cH^{s,\gamma}_p(\B)$ for every $c>0$ with respect to any sector 
\begin{eqnarray}\label{Lambda} 
\Lambda =\Lambda_\theta = \{ z=re^{i\phi} \in \C:r\ge 0; \theta \le\phi\le2\pi-\theta\}. 
\end{eqnarray}
where $0<\theta<\pi$. 

From this we will derive that $c -mv^{(m-1)/m}\underline\Delta$, is $R$-sectorial for the same sector $\Lambda$ for every $v$ in a neighborhood $U$ of the strictly positive initial value $u_0$, provided $c>0$ is sufficiently large. Since the angle $\theta$ can be chosen smaller than $\pi/2$, a theorem of Weis \cite[Theorem 4.2]{W} finally shows that $A(v)$ has maximal regularity for all $v\in U$, in particular for $v_0$.

In Section \ref{H1H2} we will then verify that the conditions (H1) and (H2) are fulfilled; (H3) is superfluous in our case. 

\subsection{Bounded $H^\infty$-calculus for $c-\underline \Delta$, $c>0$}\label{HIDelta}

We will show: 

\begin{theorem}\label{hinftyDelta}
Let $c>0$, $s\in \R$, $1<p<\infty$ and $\gamma$ be chosen according to \eqref{gamma_new}. Then 
$c-\underline\Delta$, considered as an unbounded operator in $\cH^{s,\gamma}_p(\B)$ with the domain \eqref{domD}, has a bounded $H^\infty$-calculus on $\Lambda$. 
\end{theorem}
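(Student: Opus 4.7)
The plan is to apply the general framework for bounded $H^\infty$-calculus on cone manifolds used in \cite{CSS1, RS2}, combined with the description of closed extensions from \cite{SS2}. The proof proceeds in three stages: verifying the parameter-dependent ellipticity of $c-\Delta$, constructing a parameter-dependent resolvent of $c-\underline{\Delta}-\lambda$ that is compatible with the chosen domain \eqref{domD}, and assembling the $H^\infty$-calculus via a Dunford integral.

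First, I would verify that $c-\Delta$ is parameter-elliptic with respect to the sector $\Lambda$. Interior parameter-ellipticity is immediate: the principal symbol is $c+|\xi|_g^2$, which is strictly positive, and $\Lambda$ avoids the positive real axis. The conormal symbol condition requires that
$$\sigma_M(c-\Delta)(z)-\lambda = c-z^2+(n-1)z-\Delta_{h(0)}-\lambda$$
be invertible as an operator $H^2(\partial \B) \to L^2(\partial \B)$ for $\lambda \in \Lambda$ and $z$ on the weight line $\re z = (n+1)/2 - \gamma$. Using \eqref{inverse}, the spectral information on $\Delta_{h(0)}$, and the choice \eqref{gamma_new} of $\gamma$ (which keeps the weight line off the poles $q_j^\pm$), invertibility holds uniformly for $\lambda \in \Lambda$ once $c$ is taken large enough.

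Next, I would construct the resolvent. The model case is that of a straight cone ($h(x)\equiv h(0)$), where the resolvent is built by inverse Mellin transform of the symbol $(\sigma_M(c-\Delta)(z)-\lambda)^{-1}$. The specific domain $\scrD(\underline{\Delta})$, which contains $\scrE_{q_j^-}$ for $j=0,\ldots,k$ but excludes the $\scrE_{q_j^+}$ that may lie in $I_\gamma$, is realized by a Krein-type correction: starting from the resolvent on the minimal extension, one adds a finite-rank modification determined by the asymptotic spaces. Uniform sectorial estimates in $\lambda$ on both components yield $R$-sectoriality of the model operator. The non-straight case follows by perturbation, since $\Delta-\Delta_0$ has lower Mellin order and is subordinate to the leading term after increasing $c$ if necessary; a partition of unity then glues the cone parametrix near $\partial\B$ to a standard interior resolvent parametrix on $\cH^{s,\gamma}_p(\B)$.

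Finally, the $H^\infty$-calculus is defined through the Dunford integral
$$f(c-\underline{\Delta}) = \frac{1}{2\pi i}\int_\Gamma f(\lambda)\,(c-\underline{\Delta}-\lambda)^{-1}\,d\lambda$$
for $f$ bounded holomorphic on the interior of the complement of $\Lambda$ and $\Gamma$ the boundary of a slightly smaller sector, and the sectorial resolvent bounds translate into $\|f(c-\underline{\Delta})\|_{\scrL(\cH^{s,\gamma}_p(\B))} \le C\|f\|_\infty$. I expect the main obstacle to lie in the second stage, namely in guaranteeing that the constructed resolvent actually maps into $\scrD(\underline{\Delta})$ and not merely into a strictly larger subspace of $\scrD(\Delta_{\max})$. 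This requires careful bookkeeping of which residues at the poles $q_j^-$ (included in the domain) and $q_j^+$ (excluded) of the inverse of the conormal symbol contribute to the asymptotics, and an argument that any spurious $\scrE_{q_j^+}$ contributions are cancelled by the Krein-type correction uniformly in $\lambda\in\Lambda$.
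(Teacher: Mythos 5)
Your overall architecture matches the paper's: both reduce the claim to parameter-ellipticity of $c-\Delta$ with respect to $\Lambda$ in the sense of \cite{SS2} (conditions (E1)--(E3)), after which \cite[Theorem 5.2]{SS2} delivers the bounded $H^\infty$-calculus; your first and third stages correspond to (E1), (E2) and the final citation. The gap is in your second stage. You correctly locate the difficulty — showing that the resolvent of the \emph{model cone operator} with the specific domain \eqref{domhatdelta} (full eigenspaces over $q_1^-,\dots,q_k^-$ and $\underline{\widehat\scrE}_0$ over $0$, but the zero space over every $q_j^+$ in $I_\gamma$) satisfies the uniform bound $\|\lambda(\lambda-\widehat{\underline\Delta})^{-1}\|\le C$ on $\Lambda$, i.e.\ condition (E3) — but you leave it as ``the main obstacle'' instead of resolving it. This is not bookkeeping that falls out of a resolvent construction: not every closed extension satisfies (E3), and the known criterion (\cite[Theorem 6.5]{SS2}, restated as Theorem \ref{CondE3}) is an orthogonality/duality condition on the chosen asymptotic spaces, namely $\widehat{\underline\scrE}_q^\perp=\widehat\scrE_{n-1-q}$ for poles $q\in I_\gamma\cap I_{-\gamma}$, together with all-or-nothing requirements on $I_\gamma\setminus I_{-\gamma}$ depending on the sign of $\gamma$. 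The substance of the paper's proof is the verification of these conditions for the domain \eqref{domD}: one uses that $I_\gamma\cap I_{-\gamma}$ is symmetric about $(n-1)/2$, hence contains $q_j^-$ and $q_j^+$ only in pairs; that taking the full space over $q_j^-$ and $\{0\}$ over $q_j^+$ is exactly the dual pairing demanded by (i); and one treats separately the double pole $q_0^-=q_0^+=0$ when $n=1$, where $\underline{\widehat\scrE}_0$ is self-orthogonal. Without this your ``Krein-type correction'' has no content.

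Two further omissions. First, the machinery of \cite{SS2} applies for $s\ge0$, so $s<0$ needs a separate argument; the paper dualizes, computes $\scrD((\underline\Delta)^*)$ in $\cH^{-s,-\gamma}_{p'}(\B)$ via \cite[Theorem 5.3]{SS1}, re-checks (i)--(iii) for the adjoint, and takes adjoints again. Second, the general theorem yields the calculus only for $c$ sufficiently large, whereas the statement asserts it for every $c>0$; this requires the additional observation that the spectrum of $\underline\Delta$ is contained in $\R_{\le0}$, so $c-\underline\Delta$ is invertible for all $c>0$.
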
 

The proof relies on work in \cite{SS1} and \cite{SS2}. In order to state the details, we will need two more definitions. 

\begin{definition} 
The model cone operator $\widehat \Delta$ associated with the Laplacian $\Delta$ is the operator obtained by evaluating the coefficients at $x=0$, i.e. 
\begin{eqnarray}
\label{modelcone}
\widehat \Delta = x^{-2}((-x\partial_x)^2 - (n-1)(-x\partial_x) +\Delta_{h(0)}).
\end{eqnarray}
\end{definition}

The name originates from the fact that this operator models the behavior of the operator $\Delta$ at the tip of the cone. It therefore can be considered as an operator on the infinite cone with cross-section $\partial \B$. Correspondingly, it acts on a scale of Sobolev spaces associated with this cone:

\begin{definition}
For $s,\gamma\in\R$ and $1<p<\infty$ denote by $\cK^{s,\gamma}_p(\R_+\times\partial \B)$ the Banach space of all distributions $u$ on $\R_+\times\partial \B$ such that for every cut-off function $\omega$\begin{enumerate}\renewcommand\labelenumi{\rm(\roman{enumi})}
\item $\omega u \in \cH^{s,\gamma}_p(\B)$, and 
\item given a coordinate map $\kappa:U\subseteq\partial \B\to \R^n$ and $\phi\in C^\infty_c(U)$, the push forward $\chi_*((1-\omega)(x)\phi(y) u)$ is an element of $H^{s}_p(\R^{1+n})$, where $\chi(x,y) = (x,x\kappa(y))$. 
\end{enumerate}
\end{definition} 

Away from the tip, $\cK^{s,\gamma}_p(\R_+\times \partial \B)$ is the canonical Sobolev space $H^{s}_p$ on the outgoing cone with cross-section $\partial \B$, defined by considering $x\in (0,\infty)$ as a fixed coordinate. 
For $p=2$, these spaces were introduced in \cite[Section 2.1.1]{Schu}; see also \cite[Section 4.2]{ScSc95}. One can show: 

\begin{lemma}
For all $s,\gamma\in \R$ and $1<p<\infty$, the model cone operator $\widehat \Delta$ induces a bounded linear map
$$\widehat\Delta: \cK^{s+2,\gamma+2}_p(\R_+\times\partial \B)\to \cK^{s,\gamma}_p( \R_+\times\partial \B).$$ 
\end{lemma}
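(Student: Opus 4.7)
The plan is to verify the two defining conditions of $\cK^{s,\gamma}_p(\R_+\times\partial\B)$ for $\widehat\Delta u$ when $u\in\cK^{s+2,\gamma+2}_p(\R_+\times\partial\B)$. Fix cutoffs $\omega$ and $\widetilde\omega$ with $\widetilde\omega\omega=\omega$ and $\widetilde\omega\equiv 1$ on a neighborhood of $\operatorname{supp}\omega$. Since $\widehat\Delta$ is a differential operator it is local, so $\omega\widehat\Delta u=\omega\widehat\Delta(\widetilde\omega u)$ and $(1-\omega)\phi\widehat\Delta u=(1-\omega)\phi\widehat\Delta((1-\widetilde\omega)u)$ modulo commutator terms supported in the transition region, where the tip- and end-descriptions of the $\cK$-norm are equivalent and only harmless lower-order contributions appear.

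For condition~(i), I must show that $\widehat\Delta:\cH^{s+2,\gamma+2}_p(\B)\to\cH^{s,\gamma}_p(\B)$ is continuous. Writing
\[
\widehat\Delta = x^{-2}\bigl((-x\partial_x)^2-(n-1)(-x\partial_x)+\Delta_{h(0)}\bigr),
\]
the prefactor $x^{-2}$ exactly compensates the shift $\gamma+2\mapsto\gamma$ in the weight exponent $\tfrac{n+1}{2}-\gamma$ appearing in \eqref{hsg}, while each factor $(x\partial_x)^k D_y^\alpha$ with $k+|\alpha|\le 2$ absorbs precisely two of the $s+2$ totally-characteristic derivatives prescribed by that definition. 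Smoothness of the coefficients of $\Delta_{h(0)}$ in $y$ makes the estimate uniform. This is the same argument used in Section~\ref{laplacian} to obtain the boundedness $\Delta:\cH^{s+2,\gamma+2}_p\to\cH^{s,\gamma}_p$, with $h(x)$ replaced by the constant family $h(0)$.

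For condition~(ii), push forward under $\chi(x,y)=(x,x\kappa(y))=:(r,z)$. A direct computation yields $x\partial_x\mapsto r\partial_r+z\cdot\nabla_z$ and $\partial_{y_j}\mapsto r\sum_k(\kappa'(y))^{k}_{\,j}\,\partial_{z_k}$, with $\kappa'(y)$ read as a smooth function of $z/r$. Inserting this into the formula for $\widehat\Delta$ and absorbing the factor $r^{-2}$ shows that $\chi_*\widehat\Delta$ is a second-order differential operator whose principal-symbol coefficients are bounded smooth functions of $z/r$, and whose lower-order coefficients carry additional factors of $r^{-1}$. On the conic set $\{r\ge c,\ z/r\in\kappa(\operatorname{supp}\phi)\}$ supporting $\chi_*((1-\omega)\phi u)$ these coefficients are uniformly bounded together with all their derivatives, so $\chi_*\widehat\Delta$ extends continuously from $H^{s+2}_p(\R^{1+n})$ to $H^s_p(\R^{1+n})$. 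No genuine difficulty is expected---this is why the authors write ``obviously''---the only care needed is standard bookkeeping of cutoff commutators in the overlap region, which deliver only smooth compactly supported corrections.
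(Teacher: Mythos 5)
Your proof is correct; note that the paper itself offers no argument here (the lemma is prefaced by ``We obviously have''), so you are supplying the omitted verification, and you do it in the standard way: check condition (i) near the tip via the Fuchs-type structure of $\widehat\Delta$ and condition (ii) away from the tip via the pushforward under $\chi$. Two small points could be tightened. First, since $\widehat\Delta$ is a differential operator, with properly nested cutoffs ($\psi\equiv 1$ on a neighborhood of the support of the outer localizer) the identities $\omega\,\widehat\Delta u=\omega\,\widehat\Delta(\widetilde\omega u)$ and $(1-\omega)\phi\,\widehat\Delta u=(1-\omega)\phi\,\widehat\Delta(\psi u)$ hold \emph{exactly}; there are no commutator remainders to estimate, so the caveat ``modulo commutator terms'' is unnecessary. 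Second, your argument for (i) counts totally characteristic derivatives against the description \eqref{hsg}, which is only available for $s\in\N_0$, whereas the lemma is claimed for all $s\in\R$; for general $s$ one should instead conjugate by $\cS_{\gamma}$ as in \eqref{hsg1}, under which $x\partial_x$ becomes $-\partial_x$ and the factor $x^{-2}$ becomes $e^{2x}$, which is exactly cancelled by the change of weight from $\gamma+2$ to $\gamma$, leaving a second-order operator with coefficients smooth and bounded (with all derivatives) in the cylindrical coordinates, hence bounded $H^{s+2}_p(\R^{1+n})\to H^{s}_p(\R^{1+n})$ for every real $s$. Your appeal to the analogous boundedness of $\Delta$ on the $\cH^{s,\gamma}_p$-scale implicitly uses this, so the gap is one of presentation rather than substance.
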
 

\subsubsection*{Proof of Theorem \ref{hinftyDelta}} 
It follows from \cite[Theorem 5.2]{SS2} that, given a closed extension $\underline A$ of a general cone differential operator $A$ and a sufficiently large $c>0$, the operator $c-\underline A$ has a bounded $H^\infty$-calculus on $\Lambda$ as an unbounded operator in $\cH^{s,\gamma}_p(\B)$, $s\ge0$, provided it is parameter-elliptic with respect to $\Lambda$. Parameter-ellipticity is defined by the properties (E1), (E2) and (E3) stated at the beginning of \cite[Section 4]{SS2}. We will reproduce and verify them for the case of the operator $\underline \Delta$. 
		
\begin{itemize}
\item {\bf(E1)} requires that both the principal pseudodifferential symbol $\sigma^2_\psi(\Delta)$ of $\Delta$ and the rescaled pseudodifferential symbol $\widetilde\sigma^2_\psi(\Delta)$, which is defined in the collar neighborhood of the boundary, have no eigenvalues in $\Lambda$. 

In the present case, we have $\sigma^2_\psi(\Delta)(z,\zeta) = -|\zeta|^2_{g}$ for $(z,\zeta)\in T^*(\Int(\B))\setminus \{0\}$, where $g$ is the Riemannian metric on $\Int(\B)$ and $|\cdot|_g$ the induced norm on $T^*(\Int(\B))$. Near $\partial B$, for $(x,y,\xi,\eta)$ in $T^*(\R_+\times \partial \B)\setminus \{0\}$, 
$$\widetilde\sigma^2_\psi(\Delta)(x,y,\xi,\eta) := x^2\sigma^2_\psi(\Delta)(x,y,\xi/x, \eta) = -\xi^2+\sigma^2_\psi(\Delta_{h(x)}) = -\xi^2-|\eta|_{h(x)}^2,$$
with the metric $h(x)$ on $\{x\}\times \partial \B$. Neither symbol has eigenvalues in $\Lambda$.

\item {\bf(E2)} requires that $\sigma_M(\Delta)(z)$ be invertible in $z=(n+1)/2-\gamma$ and $z= (n+1)/2-\gamma-2$. This is the case with the choice of $\gamma$ in \eqref{gamma_new}. 
\item {\bf(E3)} requires that $\|\lambda (\lambda-\widehat{\underline\Delta})^{-1}\|_{\cK^{0,\gamma}_2(\R_+\times \partial \B)}$ is bounded for $\lambda \in \Lambda$.
This will follow from our choice of $\scrD(\underline\Delta)$ and an application of \cite[Theorem 6.5]{SS2}. Here are the details:
\end{itemize} 

According to Theorem \ref{max_domain1} and Lemmas \ref{Esigma} and \ref{nge2pole} in the Appendix we have 
\begin{eqnarray}
\label{domhatdelta}
\scrD(\underline{\widehat\Delta})
= \cK^{s+2,\gamma+2}_p(\R_+\times\partial \B)\oplus 
\bigoplus_{j=1}^k\widehat\scrE_{q_j} \oplus 
{\underline {\widehat\scrE}_0}
\end{eqnarray}
with
\begin{eqnarray*}
\widehat \scrE_{q_j^-}
&=& \{ u\in C^\infty(\R_+\times \partial \B): u(x,y) = \omega(x) x^{-q_j^-} e(y); e \in E_j \} \text{ and }\\
{\underline {\widehat\scrE}_0} &=& \{u\in C^\infty(\R_+\times \partial \B): u(x,y) = \omega(x)e(y); e\in E_0\}.\end{eqnarray*}.

We shall now derive property (E3) from \cite[Theorem 6.5]{SS2}, which we recall for the convenience of the reader: 
\begin{theorem}\label{CondE3} 
Let $|\gamma|<(n+1)/2$ be chosen such that (E2) holds and let $\scrD(\widehat{\underline \Delta})$ 
have a domain of the form 
$$\cK^{s,\gamma}_p(\R_+\times \partial \B) \oplus \bigoplus_{q\in I_\gamma} \widehat{\underline\scrE}_q 
$$
with $I_\gamma$ as in \eqref{Igamma}, $q$ a pole of $\sigma_M(\Delta)^{-1}$ and subspaces $\widehat{\underline\scrE}_q\subseteq \widehat{\scrE}_q$ satisfying
\begin{enumerate}\renewcommand{\labelenumi}{{\rm (\roman{enumi}) }}
\item $\widehat{\underline\scrE}_{q}^\perp= \widehat{\scrE}_{n-1-q}$ for $q\in I_\gamma\cap I_{-\gamma} $
\item $\widehat{\underline\scrE}_{q} = \widehat{\scrE}_{q}$ for $q\in I_\gamma\setminus I_{-\gamma}$ and $\gamma\ge 0$
\item $\widehat{\underline\scrE}_{q} = \{0\}$ for $q\in I_\gamma\setminus I_{-\gamma}$ and $\gamma\le 0$.
\end{enumerate} 
Then $\Delta$ satisfies {\rm (E3)} for every sector $\Lambda\subset \C\setminus \R_+$.
\end{theorem}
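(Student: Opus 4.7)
The plan is to establish that the model cone Laplacian $\widehat{\underline\Delta}$, equipped with the domain prescribed in the hypothesis, is sectorial in $\cK^{0,\gamma}_2(\R_+\times \partial \B)$ for every sector $\Lambda\subset\C\setminus\R_+$, by identifying it with a self-adjoint extension of the minimal operator and appealing to the spectral theorem. I would first reduce to the case $\gamma=0$ via a weight shift: multiplication by $x^{-\gamma}$ is an isomorphism $\cK^{0,\gamma}_2\to\cK^{0,0}_2$, and conjugation moves $\widehat{\underline\Delta}$ to a cone operator of the same type with indicial intervals and extension data all shifted consistently. The hypothesis $|\gamma|<(n+1)/2$ guarantees that the resulting operator is densely defined and symmetric on the natural $L^2$-space attached to the cone.

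Next I would identify the domain prescribed by the conditions (i)--(iii) as that of a self-adjoint extension. The indicial roots satisfy $q_j^++q_j^-=n-1$, and Green's formula, applied with respect to the conic volume form $x^n\sqrt{\det h(0)}\,dx\,dy$, produces a boundary pairing at $x=0$ that couples $\widehat{\scrE}_q$ nondegenerately with $\widehat{\scrE}_{n-1-q}$. On the overlap $I_\gamma\cap I_{-\gamma}$ condition (i) asserts exactly that $\widehat{\underline\scrE}_q$ is a Lagrangian subspace with respect to this pairing, which is the von Neumann criterion for a symmetric extension to be self-adjoint. For indicial roots lying in only one of $I_\gamma$ and $I_{-\gamma}$ there is no partner available in the adjoint's asymptotic space, so conditions (ii) and (iii) simply dictate the trivial choice: the full space $\widehat{\scrE}_q$ when $\gamma\ge 0$ and the zero subspace when $\gamma\le 0$, matching $\mathrm{dom}(\widehat{\underline\Delta}^*)$. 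Once self-adjointness is established, nonpositivity of $\widehat\Delta$ as a Laplace--Beltrami operator places the spectrum of $-\widehat{\underline\Delta}$ in $[0,\infty)$, whence the functional calculus yields $\|\lambda(\lambda-\widehat{\underline\Delta})^{-1}\|\le C_\theta$ uniformly on every sector $\Lambda_\theta\subset\C\setminus\R_+$, which is exactly property (E3).

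The main obstacle will be the explicit computation and nondegeneracy of the boundary pairing. One needs to take representatives of $\widehat{\scrE}_q$ of the form $\omega(x)x^{-q}e(y)$ with $e$ in the associated eigenspace $E_j$ of $\Delta_{h(0)}$, substitute them into Green's identity on $\R_+\times\partial\B$, and verify that the limit $x\to 0^+$ produces precisely the $L^2(\partial\B)$-inner product between matched eigenspaces, and hence a nondegenerate pairing of $\widehat{\scrE}_q$ with $\widehat{\scrE}_{n-1-q}$. A secondary subtlety is the double-pole case $n=1$, where $q_0^+=q_0^-=0$ forces a logarithmic generator in $\widehat{\scrE}_0$; the boundary form then acquires off-diagonal contributions that must be tracked carefully, consistent with the distinction between $\scrE_0$ and $\underline\scrE_0$ made in \eqref{scrE0}. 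Nonstraight cones introduce lower-order perturbations via $H(x)$ in \eqref{LB}, but these are subleading in the indicial analysis and therefore do not affect the pairing on the leading asymptotic spaces.
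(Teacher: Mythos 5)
First, a remark on the comparison itself: the paper does not prove this statement. Theorem \ref{CondE3} is quoted from \cite[Theorem 6.5]{SS2} "for the convenience of the reader", so there is no internal proof to measure your attempt against; it has to stand on its own. It does not, for two reasons.

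The first gap is the reduction to $\gamma=0$. Multiplication by $x^{-\gamma}$ is indeed an isomorphism $\cK^{0,\gamma}_2\to\cK^{0,0}_2$, but the conjugated operator $x^{\gamma}\widehat\Delta x^{-\gamma}$ is \emph{not} symmetric in $L^2$ unless $\gamma=0$: for $\gamma\neq0$ the formal adjoint of $\widehat{\underline\Delta}$ acts in $\cK^{0,-\gamma}_2$, not in $\cK^{0,\gamma}_2$, so the Hilbert-space adjoint needed for von Neumann theory is a different object and the spectral theorem is unavailable. Conditions (ii) and (iii) are nonvacuous exactly in this non-symmetric regime, and your proposal offers no mechanism for the resolvent bound there beyond the observation that the choices "match $\mathrm{dom}(\widehat{\underline\Delta}^*)$". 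The proof in \cite{SS2} instead constructs the resolvent of $\widehat{\underline\Delta}$ explicitly (building on \cite{SS1}, \cite{GKM1}) as the resolvent of a reference extension plus a finite-rank correction indexed by the asymptotic spaces; conditions (i)--(iii) are precisely what make that correction uniformly bounded on the sector.

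The second gap is the step ``self-adjoint together with $-\widehat\Delta\ge0$ on $C^\infty_c$ implies spectrum of $-\widehat{\underline\Delta}$ in $[0,\infty)$''. This is false for extensions: a non-Friedrichs self-adjoint extension of a nonnegative symmetric operator can have negative spectrum (point-interaction extensions of the Laplacian on $\R^{2}\setminus\{0\}$ are the standard example), and by the dilation homogeneity of the model cone operator, whose domains of the form $\cK^{s,\gamma}_p\oplus\bigoplus\widehat{\underline\scrE}_q$ are dilation invariant, any such eigenvalue generates an entire ray of spectrum, so (E3) genuinely fails for those extensions. Your Lagrangian criterion admits them: for $n=1$, $\gamma=0$, the domains generated by $\omega(x)(e_0+\alpha e_0\ln x)$ are all self-adjoint, yet only $\alpha=0$ satisfies condition (i), and only that one satisfies (E3). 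So even in the one case where your strategy applies, it establishes self-adjointness but not the spectral localization, which is the actual content of the theorem; the specific form of (i)--(iii) must enter the argument in a way your proof does not use. (A minor point in your favor: as printed, condition (i) should read $\widehat{\underline\scrE}_{q}^{\perp}=\widehat{\underline\scrE}_{n-1-q}$, and your Lagrangian-type reading of it is the intended one; the identity $q_j^{+}+q_j^{-}=n-1$ and the Green's-formula pairing you describe are indeed the correct algebraic skeleton behind the $\perp$ operation.)
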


Here, the spaces $\widehat{\underline \scrE}_q^\perp$ are defined as follows: If $q=q_j^\pm$ for some $j\ge1$ or if $n>1$, and 
$$\widehat{\underline \scrE}_{q_j^\pm} = \{ u : u(x,y) = \omega(x) x^{-q_j^\pm}e(y): e\in \underline E_j\}$$
for some subspace $\underline E_j \subseteq E_j$, then 
$$\widehat{\underline \scrE}_{q_j^\pm}^\perp = \{ u : u(x,y) = \omega(x) x^{-q_j^\mp}e(y): e\in \underline E_j^\perp\}.$$
In case $n=1$ and $j=0$, i.e., $q=q_0^+=q_0^-=0$, we let
$$\widehat{\underline \scrE}_0^\perp =
\begin{cases}
\widehat\scrE_0;& \widehat{\underline\scrE}_0=\{0\}\\
\{0\};&\widehat{\underline \scrE}_0=\widehat\scrE_0\\
\widehat{\underline\scrE}_0;&\widehat{\underline\scrE}_0=\{ u(x,y) = \omega(x)E_0\}.
\end{cases} 
$$
So let us verify (i), (ii) and (iii) in Theorem \ref{CondE3}: 
\subsubsection*{The case $\gamma\ge 0$} Here 
$$I_\gamma\cap I_{-\gamma} = \left]\frac{n+1}2+\gamma-2,\frac{n+1}2-\gamma\right[ \text{ and } 
I_\gamma\setminus I_{-\gamma} = \left] \frac{n+1}2-\gamma-2, \frac{n+1}2+\gamma-2\right[.
$$
According to \eqref{qj}, the $q_j^-$ lie to the left of $(n-1)/2$ on the real axis, the $q_j^+$ to the right at the same distance. Since $I_\gamma\cap I_{-\gamma}$ is symmetric about $(n-1)/2$, it will contain either both $q_j^-$ and $q_j^-$ or neither of them. 

In \eqref{gamma_new} we have chosen the full spaces $\widehat{\scrE}_{q_j^-}$ over the $q_j^-$ in $I_\gamma$ for $j=1,\ldots,k$, and also over $q_0^-$ in case $n>1$. Over any of the $q_j^+$, $j\ge1$, we have chosen the zero spaces.
For $n=1$ and $q_0^-=q_0^+=0$, the space $\widehat{\underline\scrE}_0$ is self-orthogonal by definition. Hence property (i) holds. Since $I\gamma\setminus I_{-\gamma} $ lies to the left of $I_\gamma\cap I_{-\gamma}$ on the real axis, the above consideration shows that it does not contain any of the $q_j^+$. Hence (ii) also holds. 

\subsubsection*{The case $\gamma\le 0$} Again, $I_\gamma\cap I_{-\gamma}$ is symmetric about $(n-1)/2$, so that we can argue as before. The interval 
$I_\gamma\setminus I_{-\gamma} = \left] \frac{n+1}2+\gamma-2, \frac{n+1}2-\gamma-2\right[$ lies to the right of $I_\gamma\cap I_{-\gamma}$ on the real axis, so it will not contain any of the $q_j^-$,
and condition (iii) holds. 

\subsubsection*{Conclusion for the case $s\ge0$} Since (i), (ii) and (iii) in Theorem \ref{CondE3} are fulfilled, property (E3) holds. Theorem \ref{hinftyDelta} now implies that $c-\underline\Delta$ has a bounded $H^\infty$-calculus as a closed unbounded operator in $\cH^{s,\gamma}_p(\B)$ for $s\ge0$ and for sufficiently large $c>0$. The $c$ has to be taken so large that $(c-\underline\Delta)^{-1}$ exists. In the case at hand, we know that the spectrum of $\underline\Delta$ is contained in $\R_{\le0}$ and contains $0$. Hence any positive $c$ will do.

\subsubsection*{The case $s<0$} We will show that the adjoint $c-(\underline\Delta)^*$ of $c-\underline\Delta$ with respect to the $\cH^{0,0}_2(\B)$ inner product has a bounded $H^\infty$-calculus in the dual space $\cH^{-s,-\gamma}_{p'}(\B)$, where $1/p+1/p'=1$.
As before, we will derive the existence of the bounded $H^\infty$-calculus from \cite[Theorem 5.2]{SS2} by checking the conditions (E1), (E2) and (E3) in Theorem \ref{CondE3}.

Since the Laplacian is formally self-adjoint, the principal symbols of $\Delta^*$ are those of $\Delta$. Hence (E1) and (E2) are fulfilled. 
Moreover, the model cone operators of $\Delta$ and $\Delta^*$ coincide except for their domains.

According to \cite[Theorem 5.3]{SS1}, the domain of the adjoint $\widehat{\underline\Delta}^*$ of a closed extension $\widehat{\underline \Delta}$ of $\widehat\Delta$ with domain 
$$\scrD(\widehat{\underline \Delta}) = \scrD(\widehat\Delta_{\min, \cK^{s,\gamma}_p(\R_+\times \partial \B)}) \oplus \bigoplus_{q\in I_\gamma} \widehat{\underline \scrE}_q$$
is 
$$\scrD(\widehat{\underline \Delta}^*) = \scrD(\widehat\Delta_{\min, \cK^{-s,-\gamma}_{p'}(\R_+\times\partial \B)}) \oplus \bigoplus_{q\in I_\gamma} \underline {\widehat\scrE}_q^\perp,$$
where the notation indicates in which space the minimal extension is taken. 

Since both the $q_j^\pm$ and the sets $I_\gamma \cup I_{-\gamma}$ and $I_\gamma\cap I_{-\gamma}$ lie symmetric about the point $(n-1)/2$, we have 
$$\frac{n+1}2+\gamma -2\not= q_j^\pm \text{ and } \frac{n+1}2 +\gamma\not= q_j^\pm$$
for all $j$ as consequence of our choice of $\gamma$, see \eqref{gamma_new}.
Hence 
$$\scrD(\widehat{\underline\Delta}^*)_{\min;\cK^{-s,-\gamma}_{p'} (\R_+\times \partial \B)} =
\cK^{-s+2,-\gamma+2}_p(\R_+\times\partial \B).$$
The fact that $\scrD(\widehat{\underline\Delta})$ satisfies (i), (ii) and (iii) of Theorem \ref{CondE3} then immediately implies that the same is true for $\scrD(\widehat{\underline\Delta}^*)$. 

\subsubsection*{Conclusion for $s<0$} Since the conditions in Theorem \ref{CondE3} are fulfilled, 
\cite[Theorem 5.2]{SS2} shows that $(c-\underline\Delta)^*$, considered as an unbounded operator in $\cH^{-s,-\gamma}_{p'}(\B)$ has a bounded $H^\infty$-calculus on $\Lambda$. 
Taking once more the formal adjoint, we conclude from \cite[2.11(v)]{DHP} that $c-(\underline\Delta)^{**} = c-\underline\Delta$ has a bounded $H^\infty$-calculus on $\Lambda$ in $\cH^{s,\gamma}_p(\B)$. 
The proof of Theorem \ref{hinftyDelta} is now complete. 
\hfill $\Box$

\subsection{Maximal regularity of $-mv^{(m-1)/m}\underline \Delta$}\label{SectMaxReg}

According to Lemma \ref{interpolation}\,(ii), the interpolation space 
$(\cH^{s,\gamma}_p(\B), \scrD(\underline\Delta))_{1-1/q,q}$ embeds into 
\begin{eqnarray}
\label{MRD}
\cH^{s+2-2/q-\varepsilon, \gamma+2-2/q-\varepsilon}_p(\B)\oplus \bigoplus_{1\le j\le r} \scrE_{q_j^-}\oplus \underline\scrE_0,
\end{eqnarray}
for all $\varepsilon >0$, where $r\ge0$ satisfies 
$$q_{r+1}^-<\frac{n+1}2-\gamma-2+\frac2q+\varepsilon<q_r^- $$
for all sufficiently small $\varepsilon>0$. 
In particular, 
\begin{eqnarray}
\label{coarse_emb}
(\cH^{s,\gamma}_p(\B), \scrD(\underline\Delta))_{1-1/q,q}\hookrightarrow
\cH_{p}^{s_0,\gamma_0}(\mathbb{B}) \oplus \underline\scrE_0, 
\end{eqnarray}
for 
\begin{eqnarray}
s_0&=&s+2-\frac2q-\varepsilon >\frac{n+1}p \text{ and }\label{s0}\\
\gamma_0&=&\min\Big\{\gamma+2-\frac2q-\varepsilon, \frac{n+1}2-q_1^--\varepsilon\Big\}>\frac{n+1}2.
\label{gamma0}
\end{eqnarray}

From Theorem \cite[Lemma 6.2]{RS2} we recall the following: 

\begin{proposition}\label{SI}
For $s_0>(n+1)/p$ and $\gamma_0>(n+1)/2$ the space $\cH_{p}^{s_0,\gamma_0}(\mathbb{B}) \oplus \underline\scrE_0$ in \eqref{coarse_emb} is a spectrally invariant Banach subalgebra of $C(\B)$ and hence closed under holomorphic functional calculus. 
\end{proposition} 

\begin{theorem} \label{maxreg}
Let $v\in (\cH^{s,\gamma}_p(\B), \scrD(\underline\Delta))_{1-1/q,q}$ be strictly positive. Then there exists a $c>0$ such that $c-mv^{(m-1)/m}\underline \Delta$ is $\mathcal R$-sectorial (see Definition \ref{Rsectorial}) in $\cH^{s,\gamma}_p(\B)$ on the sector $\Lambda$ in \eqref{Lambda}.

According to a theorem of Weis \cite[Theorem 4.2]{W}, the fact that the angle $\theta$ in \eqref{Lambda} can be chosen to be less than $\pi/2$ implies that $c-mu^{m-1}\underline \Delta$ has maximal regularity. In particular this holds for the initial value $u_0$ in \eqref{PME2}.
\end{theorem}

\begin{proof} For strictly positive $v\in (\cH^{s,\gamma}_p(\B), \scrD(\underline\Delta))_{1-1/q,q}\hookrightarrow \cH_{p}^{s_0,\gamma_0}(\B) \oplus \underline\scrE_0$ the spectral invariance implies that 
$$mv^{(m-1)/m} \in \cH_{p}^{s_0,\gamma_0}(\mathbb{B}) \oplus \underline\scrE_0$$
for $s_0$ and $\gamma_0$ defined by \eqref{s0} and \eqref{gamma0}, respectively.
We now infer from \cite[Theorem 6.1]{RS2} that, for suitably large $c>0$, the operator $c-mv^{(m-1)/m} \underline\Delta$ is $\mathcal R$-sectorial of angle $\theta$ for any $\theta \in (0,\pi)$.
Note that, while the situation in \cite{RS2} is different, it is pointed out after Equation (6.3) in \cite{RS2} that the only property needed of $mv^{(m-1)/m}$ is that it belongs to the space $\cH^{s_0,\gamma_0}_p(\B)\oplus\underline\scrE_0$, where the $s$ in \eqref{s0} satisfies $s+2-2/q>|s| + 1+(n+1)/p$ and $\gamma_0>(n+1)/2$ (the reason for this condition is that Lemma \ref{mult}, below, is needed). By the assumptions on $p$, $q$ and $s$ in \eqref{pq} and \eqref{s} this is the case.
\end{proof} 

\subsection{Verifying the assumptions in the Cl\'ement-Li Theorem}\label{H1H2} 
We shall apply Theorem \ref{CL} with $s,\gamma, p$ and $q$ as in Theorem \ref{main}, $X_1=\scrD(\underline\Delta)$ and $X_0= \cH^{s,\gamma}_p(\B)$. 
The interpolation space $(X_0,X_1)_{1-1/q,q}$ is a subset of $C(\B)$ with a stronger topology.
Moreover, $v_0$ is strictly positive by assumption. 
Hence, given any $\rho_0,\rho_1\in\R$ with 
$$0<\rho_0<\frac12\inf\{u_0(z): z\in \B\}\le 2\sup\{ u_0(z):z\in \B\}< \rho_1,$$ 
the set of all functions $v$ in $(X_0,X_1)_{1-1/q,q}$ satisfying $\rho_0<u(z)<\rho_1$ defines a neighborhood $U_{\rho_0,\rho_1}$ of $v_0$ in $(X_0,X_1)_{1-1/q,q}$.
Furthermore, we choose a contour $\Gamma$ in $\{\re z>0\}$ that simply surrounds the interval $[\rho_0,\rho_1]$. With this set-up, we can essentially proceed as in the proof of \cite[Theorem 6.5]{RS2}. For completeness we give the details. 

To verify Condition (H1) for $v_1, v_2$ in $U_{\rho_0,\rho_1}$
we have to estimate the quotient 
$$\|v_1^{(m-1)/m}\underline \Delta - v_2^{(m-1)/m}\underline \Delta\|_{\scrL(X_1,X_0)}/\|v_1-v_2\|_{(X_0,X_1)_{1-1/q,q}}.$$ 
For this it is sufficient to consider 
$$\|v_1^{(m-1)/m}- v_2^{(m-1)/m}\|_{\scrL(X_0)}/\|v_1-v_2\|_{(X_0,X_1)_{1-1/q,q}},$$
where $v_1^{(m-1)/m}$ and $v_2^{(m-1)/m}$ act on $X_0$ as multiplication operators. 

We recall from \eqref{coarse_emb} and Proposition \ref{SI} that $(X_0,X_1)_{1-1/q,q}$ embeds into $\cH^{s_0,\gamma_0}_p(\B)\oplus \underline\scrE_0$ which is spectrally invariant in $C(\B)$.
The identity 
$$(v_1-\lambda)^{-1} - (v_2-\lambda)^{-1} = (v_1-\lambda)^{-1}(v_2-v_1)(v_2-\lambda)^{-1} ,$$
which holds, whenever the inverses exist, implies that, for $v_1, v_2\in U_{\rho_0,\rho_1}$ we can write 
\begin{eqnarray*}
v_1^{(m-1)/m} - v_2^{(m-1)/m} = \frac{v_2-v_1}{2\pi i}\int_{\Gamma} w^{(m-1)/m} (v_1-w)^{-1} (v_2-w)^{-1} \, dw,
\end{eqnarray*}
where equality holds in $\cH^{s_0,\gamma_0}_p(\B)\oplus \underline\scrE_0$. 
In order to estimate the right hand side, we apply \cite[Corollary 3.3]{RS2}, which we restate here for convenience: 
\begin{lemma}\label{mult} 
For $1<p,q<\infty$, $\gamma\in \R$, multiplication defines a bounded map 
$$\cH^{\sigma,(n+1)/2}_q (\B) \times \cH^{s,\gamma}_p(\B)\to \cH^{s,\gamma}_p(\B)$$
provided $\sigma>|s|+(n+1)/q$. 
\end{lemma} 

Recall that $s_0$ in \eqref{coarse_emb} is given by $s_0= s+2-2/q-\varepsilon$. Taking $\varepsilon$ sufficiently small, the conditions in \eqref{pq} and \eqref{s} imply that $s_0=s+2-2/q-\varepsilon > |s|+(n+1)/p$. We can therefore apply Lemma \ref{mult} twice with $\sigma = s_0$ and conclude that, for $v\in X_0=\cH^{s,\gamma}_p(\B)$
\begin{eqnarray*}
\lefteqn{\|(v_1^{(m-1)/m} -v_2^{(m-1)/m})v \|_{X_0}
\le C \|v_1-v_2\|_{\cH^{s_0,\gamma_0}_p(\B)}
}\\
&&
\times \left\|\int_\Gamma w^{(m-1)/m}(v_1-w)^{-1}(v_2-w)^{-1}\, dw\right\|_{\cH^{s_0,\gamma_0}_p(\B)} \|v\|_{X_0}.
\nonumber 
\end{eqnarray*}
As $w$ has positive distance to the range of $v_1$ and $v_2$, respectively, the terms in the integrand are bounded away from zero in $\cH^{s_0,\gamma_0}_p(\B)$, and hence the norm of the integral is bounded. 
Moreover, 
\begin{eqnarray}\nonumber
\lefteqn{\|v_1^{(m-1)/m}\underline \Delta - v_2^{(m-1)/m}\underline \Delta\|_{\scrL(X_1,X_0)}}\\\label{Estum}
&\le& \|v_1^{(m-1)/m} - v_2^{(m-1)/m}\|_{\scrL(X_0)} \le C\|v_1-v_2\|_{(X_0,X_1)_{1-1/q,q}}.
\end{eqnarray}
This shows (H1). 
Condition (H2) is trivially fulfilled in view of the assumption on the forcing term $G$. 
So, the proof of Theorem \ref{main} is complete. \hfill $\Box$

\subsection{Proof of Theorem \ref{SI2}} 
We start with the following observations. 
\begin{enumerate}\renewcommand{\labelenumi}{(\roman{enumi})} 
\item For $s_0>(n+1)/p$, $1<p<\infty$, and $\gamma_0\ge (n+1)/2$, $\cH^{s_0,\gamma_0}_p(\B)$ is a (non-unital) Banach algebra (up to an equivalent norm). This is \cite[Corollary 2.8]{RS1}.

\item For $v_1,v_2\in \underline \scrE_0$, the product $v_1v_2$ is in $\cH^{s_0, \gamma_0}_p(\B)\oplus \underline\scrE_0$ for all $s_0$, $\gamma_0$, $1<p<\infty$. 
Indeed, if $v_j (x,y) = \omega(x)e_j$ with $e_j\in E_0$, $j=1,2$, then 
$$v_1(x,y)v_2(x,y)-\omega(x) e_1(y)e_2(y) = (\omega^2(x)-\omega(x))e_1(y)e_2(y)$$
since the elements of $E_0$ are locally constant on $\partial \B$. The assertion then follows from the fact that $(\omega^2(x)-\omega(x) )e_1(y)e_2(y)$ is smooth and vanishes near $\partial \B$. 

\item For $v_1\in \cH^{\underline s,\underline\gamma}_p(\B)$ and $v_2\in \cH^{\underline s,(n+1)/2}_p(\B)$ we have $v_1v_2$ in $\cH^{\underline s,\underline\gamma}_p(\B)$.
To see this choose a function $\underline x$ on $\B$ coinciding with $x$ near $\partial \B$ and equal to $1$ away from the boundary. Next write 
\begin{eqnarray*}
\cH^{\underline s,\underline \gamma}_p(\B) &=& \underline x^{\underline \gamma-(n+1)/2}\cH^{\underline s,(n+1)/2}_p(\B).
\end{eqnarray*}
Then use (i) and the fact that $\cH^{\underline s,(n+1)/2}_p(\B)$ is an algebra and that multiplication by
 $\underline x^{\underline \gamma-(n+1)/2}$ maps $\cH^{\underline s,(n+1)/2}_p(\B)$ to 
 $\cH^{\underline s, \underline \gamma}_p(\B)$.

\item Let $N\in \N$ be so large that $Nq_1^-<(n+1)/2-\underline\gamma$ and let 
$v_1, \ldots, v_N\in \scrE_{q_1^-}\oplus \ldots \oplus \scrE_{q_r^-}$.
Then $ v_1\cdot\ldots\cdot v_N \in \cH^{\underline s, \underline \gamma}_p(\B).$ 
Namely, the product is a finite sum of terms of the form 
$x^{-q} \ln^Kx\; \omega^N(x)c(y)$ with $q\le Nq_1^-$, $K\le N$ and $c\in C^\infty(\partial \B)$. 
It therefore belongs to $\cH^{\underline s, \underline \gamma}_p(\B)$. 
\item For $s_0>(n+1)/p$, $1<p<\infty$, and $\gamma_0> (n+1)/2$, $\cH^{s_0,\gamma_0}_p(\B)\oplus \underline \scrE_0$ is a spectrally invariant Banach subalgebra of $C(\B)$ by Proposition \ref{SI}. 
\end{enumerate} 
In order to see the spectral invariance, we consider an element $w\in \B$ that is invertible in $C(\B)$; i.e., $w$ is bounded away from zero on $\B$. Since the elements of $\cH^{\underline s, \underline \gamma}_p(\B)$ and those of $\scrE_{q_1^-}\oplus \ldots \oplus \scrE_{q-r^-}$ all vanish at $x=0$, we infer from (i), (ii) and (iii) that we can write $w$ in the form 
$$w=e_0+ V_0 + u_0,$$
where $e_0\in \underline \scrE_0$ is bounded away from zero, $V_0$ is a sum of terms of the form 
$v_1\cdot \ldots \cdot v_m$ for some $m$ with at least one factor in $\scrE_{q_1^-}\oplus \ldots \oplus\scrE_{q_r^-}$ and $u_0\in \cH^{\underline s, \underline\gamma}_p(\B)$. 

Choose $u_1 \in \cH^{\underline s,\underline\gamma}_p(\B)$ such that $e_0 + u_1$ is invertible in $C(\B)$. Then we can write 
\begin{eqnarray*}
w&=& (e_0+u_1)+V_0 + (u_0-u_1)\\ &=& (e_0+u_1)\left(1+ (e_0+u_1)^{-1} V_0 + (e_0+u_1)^{-1} (u_0-u_1) \right) .
\end{eqnarray*}
According to (v), $(e_0+u_1)^{-1} = \tilde e_0 + \tilde u_1$ with $\tilde e_0 \in \underline \scrE_0$ and $\tilde u_1 \in \cH^{\underline s, \underline\gamma}_p(\B)$. Hence we can represent $w$ in the form 
$$w= (e_0+u_1)( 1+V+u) ,$$
where $V=\tilde e_0V_0$ has the same structure as $V$, and $u = (\tilde e_0 + \tilde u_1)(u_0-u_1)+ \tilde uV_0\in \cH^{\underline s,\underline\gamma }_p(\B)$ by (iii). 

Next consider the finite geometric series expansion
\begin{eqnarray}
\label{geomseries}
(1+y)^{-1} = 1-y+y^2\ldots +(-1)^Ny^N(1+y)^{-1}
\end{eqnarray}
for $y= V+u$. As $V$ contains at least one factor in $\scrE_{q_1^-}\oplus \ldots \oplus\scrE_{q_r^-}$, it belongs to some space $\cH^{\underline s,\gamma_0}_p(\B)$ with $\underline\gamma\ge\gamma_0>(n+1)/2$ so that $1+V+u\in \cH^{\underline s,\gamma_0}_p(\B)\oplus \underline \scrE_0$. By (v), $(1+V+u)^{-1}\in \cH^{\underline s,\gamma_0}_p(\B)\oplus \underline \scrE_0$.
Due to our assumption on $V$ and (iv), $y^N = (V+u)^N\in\cH^{\underline s,\underline\gamma}_p(\B)$. We then conclude from \eqref{geomseries} and (iii) that $(1+V+u)^{-1}\in \scrB$. Since also $(e_0+u_1)^{-1}$ belongs to $\scrB$ so does $w^{-1}$. 

In order to see that $\scrB$ is closed under holomorphic functional calculus, let $v\in \scrB$ and let $f$ be a function that is holomorphic in an open neighborhood $U$ of the range of $v$. For a piecewise differentiable contour $\scrC$ in $U$ that simply surrounds the range of $v$, 
$$ f(v) = \frac1{2\pi i} \int_{\scrC} f(z) (v-z)^{-1} \, dz.$$
We claim that this is an element of $\scrB$. Indeed, $z\mapsto v-z$ is a continuous map with values in $\scrB$, because we can write $z= \omega(x) z + (1-\omega(x))z \in \underline \scrE_0\oplus \cH^{\underline s,\underline \gamma}_p(\B)$. Since inversion is continuous in any Banach algebra, the integrand is a continuous $\scrB$-valued function, so that the integral furnishes an element of $\scrB$, and Theorem \ref{SI2} is proven. \hfill $\Box$

\section{Appendix} 
\subsection{The spaces $\cH^{s,\gamma}_p(\B)$}
\label{HsgKsg}
For $\gamma\in \R$ define the map 
$$\cS_\gamma: C^\infty_c(\R^{1+n}_+)\to C^\infty_c(\R^{1+n}), \quad v(x,y)\mapsto e^{(\gamma- \frac{n+1}2)x}v(e^{-x},y).$$
\begin{definition} Let $s,\gamma\in \R$, $1<p<\infty$. 
Given coordinate charts $\kappa_j:U_j\subseteq\partial\B \to \R^n$, $j=1,\ldots ,N$, for a neighborhood of $\partial \B$ and a subordinate partition of unity $\{\phi_j: j=1,\ldots ,N\}$, 
\begin{eqnarray}\label{hsg1}\phantom{xxxx}
\cH^{s,\gamma}_p(\B)= \{ u\in H^s_{p,loc}(\Int(\B)):\cS_\gamma (1\otimes \kappa_j)_*(\phi_j u)\in H^s_p(\R^{1+n}), j=1,\ldots,N\}. 
\end{eqnarray}
\end{definition}

$\cH^{s,\gamma}_p(\B)$ is a Banach space independent of the choices made. 

\subsection{The closed extensions of $\Delta$ and $\widehat \Delta$ }
In this subsection we will recall (and slightly extend) some of the results on the structure of the domains of the closed extensions of the Laplacian, adapted from Sections 3 and 6 in \cite{SS2}, starting from equation \eqref{LB} 
$$\Delta = x^{-2} \left((-x\partial_x)^2-(n-1+H(x))(-x\partial_x) + \Delta_{h(x)}\right).$$

The Mellin transform $\scrM v$ of a function $v\in C^\infty_c(\R_+)$ is given by 
$$\scrM v(z) = \int_0^\infty x^{z-1} v(x) dx.$$
In view of the fact that $\scrM ((-x\partial_x)v) (z) = z\scrM v(z)$, we can write for $u \in C^\infty_c(\R_+\times\partial \B)$ 
$$(\Delta u) (x,y) = x^{-2}\scrM^{-1}_{z\to x} \left(z^2 -(n-1+H(x))z +\Delta_{h(x)}\right) \scrM_{x\to z}u(x,y).$$ 
We define two polynomials in $z$, namely
\begin{eqnarray}
f_0(z) &=& z^2-(n-1)z-\Delta_{h(0)}\\
f_1(z) &=& -(\partial_x H)_{|x=0}\, z+\partial_x(\Delta_{h(x)})_{|x=0}=: -H'z+\Delta'. \label{f1}
\end{eqnarray}
They are the first Taylor coefficients in the expansion of 
$$z^2 -(n-1+H(x))z +\Delta_{h(x)}$$ 
with respect to $x$ (recall that $H_{|x=0}=0$) and take values in differential operators on the cross-section $\partial \B$. In fact, $f_0(z)=\sigma_M(\Delta) (z)$, is the principal Mellin symbol, see \eqref{conormal}. On the right hand side of \eqref{f1}, $(\partial_x H)_{|x=0}$ is a function of $y\in \partial \B$, and $\partial_x(\Delta_{h(x)})_{|x=0}$ is a second order differential operator without zero order term. 

Next introduce the meromorphic functions 
\begin{eqnarray}
g_0(z) &=& 1\\
g_1(z) &=& -(f_0(z-1))^{-1} f_1(z) \label{g1}. 
\end{eqnarray}
The background is that then the Mellin product formula implies that 
$$\sum_{j=0}^m f_{m-j}(z-j)g_j(z)f_0(z)^{-1} =
\begin{cases} 1\,; &m=0\\
0\,;& m=1. 
\end{cases} 
$$

\begin{theorem} Let $s,\gamma\in \R$ and $1<p<\infty$. 
Then there exist subspaces $\scrE$ and $\widehat\scrE$ of $C^\infty(\R_+\times \partial \B)$ of the same finite dimension such that, for every cut-off function $\omega$, 
\begin{eqnarray*}
\scrD(\Delta_{\max})=\scrD(\Delta_{\min}) \oplus \omega\scrE\text{ and }
\scrD(\widehat\Delta_{\max})= \scrD(\widehat\Delta_{\min})\oplus \omega \widehat\scrE.
\end{eqnarray*}
If, in addition, $\sigma_M(\Delta)(z)$ is invertible as a second order pseudodifferential operator $($or, equivalently, as a bounded operator $H^2(\partial \B)\to L^2(\partial \B))$ for every $z\in \C$ with $\re(z)= \frac{n+1}2-\gamma-2$, then 
\begin{eqnarray*}
\scrD(\Delta_{\min}) =\cH^{s,\gamma}_p(\B) \text{ and } 
\scrD(\widehat\Delta_{\min})= \cK^{s,\gamma}_p(\R_+\times\partial\B) .
\end{eqnarray*}
The spaces $\scrE$ and $\widehat\scrE$ are independent of $s$ and $p$. 
\end{theorem}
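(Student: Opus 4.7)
The plan is to analyze both operators through the Mellin transform in $x$, exploiting the structure of the principal Mellin symbol $f_0 = \sigma_M(\Delta)$ together with the next Taylor coefficient $f_1 = H'z + \Delta'$ of $x^2\Delta$ at $x=0$.

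I would first handle the model cone operator $\widehat\Delta$, whose coefficients are frozen, so that only $f_0$ enters. After the weight shift $u(x,y) \mapsto x^{(n+1)/2-\gamma}u(x,y)$, the Mellin transform becomes an $L^2$-isometry along the line $\{\re z = 0\}$, and $\widehat\Delta$ is realized (up to the $x^{-2}$ shift) as multiplication by $f_0(z)$. For $u \in \scrD(\widehat\Delta_{\max})$ the fact that $\widehat\Delta u \in \cK^{s,\gamma}_p$ means that $(\scrM u)(z)$ extends meromorphically from $\re z = (n+1)/2-\gamma$ to $\re z = (n+1)/2-\gamma-2$ with singularities only at the poles of $f_0(z)^{-1}$; shifting the contour by $2$ picks up residues at the $q_j^\pm \in I_\gamma$, producing terms $\omega(x) x^{-q_j^\pm}(\log x)^\ell e(y)$ with $e \in E_j$ ($\ell = 0$ for simple poles, $\ell\in\{0,1\}$ for the double pole at $z=0$ arising when $n=1$). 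These terms span a finite-dimensional $\widehat\scrE$, and the kernel of the residue map is exactly $\scrD(\widehat\Delta_{\min})$.

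For the genuine Laplacian $\Delta$, the term $f_1$ forces correction terms. The meromorphic function $g_1(z) = -f_0(z-1)^{-1}f_1(z)$ in \eqref{g1} is engineered precisely so that, applied to an asymptotic concentrated at $z = q_j^\pm$, it shifts the leading order by $+1$, and the Mellin product formula guarantees that $\Delta$ applied to the corrected asymptotic lies in the minimal domain modulo flatter terms. Iterating (to the order dictated by the weight) produces a finite-dimensional space $\scrE \subseteq C^\infty(\R_+\times \partial\B)$ whose leading Mellin data are identical to those of $\widehat\scrE$; this gives both the decomposition $\scrD(\Delta_{\max}) = \scrD(\Delta_{\min}) \oplus \omega\scrE$ and the identity $\dim\scrE = \dim\widehat\scrE$. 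Independence of $\scrE$ and $\widehat\scrE$ from $s$ and $p$ is immediate since only the Mellin symbol data enter.

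The identification of the minimal domain under the invertibility hypothesis on $\sigma_M(\Delta)$ along $\re z = (n+1)/2-\gamma-2$ follows from a standard cone parametrix construction: interior parameter-ellipticity of $\Delta$ together with invertibility of $f_0$ on that vertical line provides a Mellin pseudodifferential parametrix $P$ mapping $\cH^{s,\gamma}_p(\B) \to \cH^{s+2,\gamma+2}_p(\B)$ with $P\Delta = I + R$ modulo a smoothing Green remainder; applied to $u\in\scrD(\Delta_{\min})$ this yields the claimed regularity, and density of $C_c^\infty(\Int(\B))$ gives the converse. The model cone case is analogous. The principal obstacle I anticipate is the bookkeeping at multiple poles of $f_0^{-1}$ and potential collisions in the recursion: when $n=1$ the double pole at $z=0$ forces $\log x$ asymptotics, and when $f_0(z-1)^{-1}$ has a pole coinciding with a $q_j^\pm$ the recursion must be carried out via Laurent expansions of products such as $f_0(z)^{-1}f_1(z)f_0(z)^{-1}$ rather than simple residues; verifying that the resulting $\scrE$ still has the expected dimension is the delicate combinatorial step.
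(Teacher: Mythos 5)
The paper does not prove this theorem itself: it is recalled (with the explicit formulas for $\scrE_\sigma$ and $\widehat\scrE_\sigma$ via $G^{(0)}_\sigma$ and $G^{(1)}_\sigma$ in Theorems \ref{max_domain1} and \ref{theta}) from Sections 3 and 6 of \cite{SS2}, building on Lesch and Gil--Krainer--Mendoza. Your outline — Mellin analysis and contour shifting for $\widehat\Delta$, the $g_1$-corrections from the Mellin product formula for $\Delta$ (for a second-order operator only one correction step is needed, since $I_\gamma$ has length $2$), and a Mellin parametrix for the minimal domain — is exactly the strategy of that reference, with the one gloss that for $p\neq 2$ the "$L^2$-isometry" step must be replaced by Mikhlin-type estimates for Mellin symbols, which is what makes the independence of $\scrE$, $\widehat\scrE$ from $s$ and $p$ a statement requiring proof rather than an immediate observation.
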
 

The following result describes the space $\widehat{\scrE}$ associated with the maximal extension of the model cone operator $\widehat \Delta$. 

\begin{theorem}\label{max_domain1}
Let $\sigma\in I_\gamma$, see \eqref{Igamma}, be a pole of $\sigma_M(\Delta)^{-1}$. Define 
$$G_\sigma^{(0)}:C^\infty_c(\R_+\times \partial \B)\to C^\infty(\R_+\times \partial \B)\cong C^\infty_c(\R_+,C^\infty(\partial \B))$$ 
by 
 \begin{eqnarray}
\label{G0}
(G_\sigma^{(0)} u)(x)=
 (2\pi i)^{-1} \int_{|z-\sigma|=\varepsilon} x^{-z}f_0^{-1}(z)\scrM{u}(z)\,d z,
\end{eqnarray}
where $\varepsilon>0$ is chosen sufficently small.
Then 
 $$\widehat\scrE=\mathop{\mbox{\Large$\oplus$}}_{\sigma\in I_\gamma} \widehat\scrE_\sigma,\qquad 
 \widehat\scrE_\sigma=\mathrm{range}\,G_\sigma^{(0)}.$$
\end{theorem}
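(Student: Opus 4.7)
The plan is to exploit the fact that the model cone operator $\widehat\Delta = x^{-2}f_0(-x\partial_x)$ has $x$-independent coefficients and therefore acts as a pure Mellin multiplier with principal Mellin symbol $f_0(z) = z^2-(n-1)z+\Delta_{h(0)}$. The analysis of $\widehat{\scrE}\cong \scrD(\widehat\Delta_{\max})/\scrD(\widehat\Delta_{\min})$ then reduces to contour integration in the complex $z$-plane.

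For the inclusion $\bigoplus_{\sigma\in I_\gamma}\widehat\scrE_\sigma \subseteq \widehat\scrE$, I would first observe that $G_\sigma^{(0)}u$ is the residue at $z=\sigma$ of $z\mapsto x^{-z}f_0^{-1}(z)\scrM u(z,\cdot)$, hence a finite sum of terms $x^{-\sigma}(\log x)^k c_k(y)$ with $c_k$ in the corresponding eigenspace; in particular $G_\sigma^{(0)}u \in C^\infty(\R_+\times\partial\B)$, and $\omega G_\sigma^{(0)}u \in \cK^{s,\gamma}_p$ because $\sigma < \frac{n+1}2 -\gamma$. Commuting $\widehat\Delta$ under the contour integral using $(-x\partial_x)x^{-z} = z x^{-z}$ yields
\begin{eqnarray*}
\widehat\Delta\, G_\sigma^{(0)}u(x,y) = \frac{1}{2\pi i}\int_{|z-\sigma|=\varepsilon} x^{-z-2}\, f_0(z)f_0^{-1}(z)\scrM u(z,y)\, dz,
\end{eqnarray*}
and since $f_0(z)f_0^{-1}(z)=\mathrm{Id}$ the integrand is holomorphic in the disk $|z-\sigma|\le \varepsilon$, so the integral vanishes. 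Hence $\widehat\scrE_\sigma \subseteq \ker\widehat\Delta_{\max}$. The directness of the sum follows from the distinct leading orders $x^{-\sigma}$ with $\sigma\in I_\gamma$, none of which is permitted by the integrability condition defining $\scrD(\widehat\Delta_{\min})=\cK^{s+2,\gamma+2}_p$ near $x=0$.

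The main obstacle is the reverse inclusion: every class in $\widehat\scrE$ must actually be represented by an element of $\bigoplus_\sigma \widehat\scrE_\sigma$. The strategy is Mellin-transform-based: for $u\in \scrD(\widehat\Delta_{\max})$, both $u$ and $\widehat\Delta u$ lie in $\cK^{s,\gamma}_p$, so $\scrM u(\cdot,y)$ is controlled on the vertical line $\re z = \frac{n+1}2-\gamma$, while $f_0(z)\scrM u(z,\cdot) = \scrM(x^2\widehat\Delta u)(z,\cdot)$ is controlled on $\re z = \frac{n+1}2-\gamma-2$. Using the invertibility of $f_0$ off its poles, one shows that $\scrM u$ extends meromorphically into the open strip between these two lines, with singularities only at the $\sigma\in I_\gamma$. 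Shifting the Mellin-inversion contour from the right-hand line to the left-hand one picks up exactly the residues at these poles, producing a representative in $\widehat\scrE_\sigma$ at each pole; the shifted boundary integral represents an element of $\cK^{s+2,\gamma+2}_p=\scrD(\widehat\Delta_{\min})$. The hardest technical point is the case $p\ne 2$, where the Mellin--Plancherel identity is unavailable and one must work in vector-valued Bessel-potential spaces along vertical lines together with contour-shift estimates, essentially following the approach of \cite{SS2}.
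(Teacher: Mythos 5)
The paper does not prove Theorem \ref{max_domain1} itself but recalls it from \cite{SS2} (building on Lesch and Gil--Krainer--Mendoza), and your outline is precisely the standard argument behind that result --- the residue computation together with $\widehat\Delta\,G^{(0)}_\sigma u=0$ for the inclusion ``$\supseteq$'', and meromorphic continuation of $\scrM u$ plus Mellin contour shifting for ``$\subseteq$'' --- so it is essentially the same approach. The only points to flesh out are ones you already flag or that are routine: one needs uniform parameter-elliptic bounds on $f_0(z)^{-1}$ along vertical lines (not just pointwise invertibility off the poles) to place the shifted integral in $\cK^{s+2,\gamma+2}_p$, and the cut-off $\omega$ must be carried through the argument, since elements of $\widehat\scrE_\sigma$ are annihilated by $\widehat\Delta$ on all of $\R_+\times\partial\B$ but only $\omega\,\widehat\scrE_\sigma$ lies in $\scrD(\widehat\Delta_{\max})$.
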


We next describe the space $\scrE$ for the maximal extension of $\Delta$ in Theorem \ref{Dom}. 

\begin{theorem}\label{theta}
Let $\sigma\in I_\gamma$ be as in Theorem \ref{max_domain1}.
Define $G_\sigma^{(0)} $ as above. In case $\sigma-1 \ge \frac{n+1}2-\gamma-2$ introduce additionally 
$G_\sigma^{(1)}:C^\infty_c(\R_+\times \partial \B)\to C^{\infty}(\R_+\times \partial \B)$ by 
\begin{equation}\label{eq:sigma0}
 (G_\sigma^{(1)} u)(x)=\frac{x}{2\pi i}\,\int_{|z-\sigma|=\varepsilon} x^{-z}g_1(z)\,\Pi_\sigma(f_0^{-1}\,\scrM{u})(z)\,d z,
\end{equation}
where $\Pi_\sigma$ is the projection onto the principal part of the Laurent series. Let
\begin{equation}\label{eq:gsigma}
 G_\sigma:=
 \begin{cases} 
G_\sigma^{(0)}; & \sigma-1 <\frac{n+1}2-\gamma-2\\
G_\sigma^{(0)}+G_\sigma^{(1)}; & \sigma-1 \ge\frac{n+1}2-\gamma-2.
 \end{cases} 
\end{equation}
Then 
 $$\scrE=\mathop{\mbox{\Large$\oplus$}}_{\sigma\in I_\gamma} \scrE_\sigma,
 \qquad \scrE_\sigma=\mathrm{range}\,G_\sigma.$$
Moreover, the following map is well-defined and an isomorphism$:$
\begin{equation}\label{eq:isom1}
 \Theta_\sigma:\scrE_\sigma\longrightarrow\widehat{\scrE}_\sigma,\quad 
 G_\sigma(u)\mapsto G^{(0)}_\sigma(u).
\end{equation}
Consequently, we obtain an isomorphism 
\begin{eqnarray}
\label{isotheta}
\Theta=\oplus_{\sigma\in I_\gamma} \Theta_\sigma: \scrE\to \widehat\scrE. 
\end{eqnarray}
\end{theorem}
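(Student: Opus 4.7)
The plan is to exploit the Mellin pseudodifferential structure of $\Delta$ near $x=0$ and cancel the singular contributions order by order using the Taylor expansion $x^2\Delta = f_0(-x\partial_x) + x\, f_1(-x\partial_x) + O(x^2)$. The operators $G_\sigma^{(0)}$ and $G_\sigma^{(1)}$ are designed so that the first two orders of $x^2\Delta$ applied to $G_\sigma(u)$ cancel, yielding $\Delta G_\sigma(u)\in\cH_p^{s,\gamma}(\B)$; then $\scrE_\sigma:=\mathrm{range}\,G_\sigma$ provides the claimed complement, and the isomorphism $\Theta_\sigma$ with $\widehat\scrE_\sigma$ follows from reading off leading asymptotics and a dimension comparison with Theorem~\ref{max_domain1}.

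For the key cancellation I would first note that
$$f_0(-x\partial_x)\,G_\sigma^{(0)}(u) = \frac{1}{2\pi i}\int_{|z-\sigma|=\varepsilon}x^{-z}\, f_0(z)f_0^{-1}(z)\,\scrM u(z)\,dz = \frac{1}{2\pi i}\int_{|z-\sigma|=\varepsilon}x^{-z}\,\scrM u(z)\,dz = 0,$$
since $u\in C_c^\infty$ makes $\scrM u$ entire. Substituting $w=z-1$ in the integral defining $G_\sigma^{(1)}(u)$ and invoking the Mellin product formula in the form $f_0(w)\,g_1(w+1)=-f_1(w+1)$, one combines
$$f_0(-x\partial_x) G_\sigma^{(1)}(u)+xf_1(-x\partial_x) G_\sigma^{(0)}(u)=\frac{1}{2\pi i}\int_{|w-(\sigma-1)|=\varepsilon} x^{-w} f_1(w+1)\bigl[f_0^{-1}\scrM u - \Pi_\sigma(f_0^{-1}\scrM u)\bigr](w+1)\,dw.$$
The bracketed quantity is the regular part of $f_0^{-1}\scrM u$ at $\sigma$, hence holomorphic at $w=\sigma-1$, so this residue integral vanishes. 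The remaining contributions, namely $xf_1(-x\partial_x)G_\sigma^{(1)}(u)$ and the $O(x^2)$ tail of $x^2\Delta$ acting on $G_\sigma(u)$, behave at worst like $x^{2-\sigma}$ and lie in $\cH_p^{s,\gamma}(\B)$ by the weight criterion \eqref{qjinhsg}. The case split in \eqref{eq:gsigma} reflects exactly when $G_\sigma^{(1)}$ must be included: if $\sigma-1<\frac{n+1}{2}-\gamma-2$, then $x^{-\sigma-1}$ already lies in $\cH_p^{s,\gamma}(\B)$ and no correction is needed; otherwise the correction is required to push the residual below the weight threshold.

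For the direct-sum structure and the isomorphism I would read off leading asymptotics: $G_\sigma(u)$ behaves like $x^{-\sigma}(\log x)^k c(y)$, with $k$ the pole order of $f_0^{-1}\scrM u$ at $\sigma$, while the correction $G_\sigma^{(1)}(u)\sim x^{1-\sigma}$ sits strictly below. Distinct values of $\sigma\in I_\gamma$ then yield distinct leading exponents, so $\sum_\sigma\scrE_\sigma$ is direct, and $\scrE_\sigma\cap\scrD(\Delta_{\min})=\{0\}$ since minimal-domain elements decay faster than $x^{-\sigma}$. Consequently $G_\sigma(u)=0$ forces $G_\sigma^{(0)}(u)=0$ by matching leading $x^{-\sigma}$ coefficients, giving well-definedness and injectivity of $\Theta_\sigma$; surjectivity is immediate, the preimage of any $G_\sigma^{(0)}(u)$ being $G_\sigma(u)$ built from the same $u$. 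Comparing with the description of $\widehat\scrE$ in Theorem~\ref{max_domain1} — the $\Theta_\sigma$ are already injective linear maps into the $\widehat\scrE_\sigma$ — one gets a dimension count that forces $\scrE = \bigoplus_\sigma \scrE_\sigma$ to exhaust the full complement of $\scrD(\Delta_{\min})$ in $\scrD(\Delta_{\max})$, with $\Theta=\oplus_\sigma\Theta_\sigma$ the claimed global isomorphism.

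The main obstacle I anticipate is bookkeeping in the first step: tracking the $O(x^2)$ Taylor remainder produced by $H(x)$ and $h(x)$ against the weight criterion \eqref{qjinhsg}, in parallel with the conditional inclusion of $G_\sigma^{(1)}$, uniformly for all $\sigma\in I_\gamma$. Once this weight accounting is in place, the algebraic identity supplied by the Mellin product formula does the rest, and the asymptotic extraction required for $\Theta_\sigma$ becomes essentially automatic.
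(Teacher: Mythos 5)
Your proposal is correct and follows exactly the route the paper intends: the paper does not actually prove Theorem \ref{theta} but quotes it from \cite{SS2}, and the scaffolding it supplies --- the definitions of $f_0,f_1,g_0,g_1$, the Mellin product identity $\sum_j f_{m-j}(z-j)g_j(z)f_0(z)^{-1}=\delta_{m0}$, and the closing remark explaining the case split in \eqref{eq:gsigma} --- is precisely the order-by-order cancellation you carry out, and your key computation $f_0(w)g_1(w+1)=-f_1(w+1)$ together with the vanishing of the contour integral of the regular part is the correct mechanism. The one step you leave implicit, the exhaustion of $\scrD(\Delta_{\max})/\scrD(\Delta_{\min})$ by the dimension count, legitimately rests on the preceding (likewise quoted) theorem asserting $\dim\scrE=\dim\widehat\scrE$ together with Theorem \ref{max_domain1}, so there is no genuine gap.
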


The reason for distinguishing the cases in \eqref{eq:gsigma} is that, for $ \sigma-1 <\frac{n+1}2-\gamma-2$, the range of $\omega G_\sigma^{(1)}$ is already contained in $\cH^{s+2,\gamma+2}_p(\B)$.
In order to avoid distinguishing too many cases, we will include the range of $G^{(1)}_\sigma$ in the sequel. 

\subsection{The Spaces $\scrE_{q_j^-}$}

Recall from \eqref{inverse} that 
\begin{eqnarray}\label{f0inverse}
f_0(z)^{-1} = \sigma_M(\Delta)^{-1}(z) = \sum_{j=0}^\infty \frac{\pi_j}{(z-q_j^+)(z-q_j^-)},
\end{eqnarray}
where $\pi_j$ is the orthogonal projection in $L^2(\partial \B)$ onto the eigenspace $E_j$ of the eigenvalue $\lambda_j$ of $\Delta_{h(0)}$. 

\subsubsection*{The spaces $\scrE_{q_j^-}$, $j\ge1$}
By \eqref{f0inverse}, $q_j^-$ is a simple pole of $f_0^{-1}$ with residue 
$$(q_j^--q_j^+)^{-1} \pi_j.
$$ 
Since $\scrM u$ is holomorphic on $\C$, \eqref{G0} and the residue theorem implies that 
\begin{eqnarray*}
(G_{q_j^-}^{(0)}u)(x) &=& (2\pi i(q_j^--q_j^+) )^{-1} \int_{|z-q_j^-|=\varepsilon}\frac{x^{-z} }{z-q_j^-}\pi_j (\scrM u(z)) \, dz\\
&=& (q_j^--q_j^+)^{-1}x^{-q_j^-}\pi_j (\scrM u(q_j^-)) .
\end{eqnarray*}
We conclude that the range of $G_{q_j^-}^{(0)}$ is the finite-dimensional space of all functions $v$ of the form $v(x,y) = x^{-q_j^-}e(y)$ with $e\in E_j$. 

Near $z=q_j^-$ we can write 
\begin{eqnarray}
g_1(z) \equiv \beta_{q_j^-}^{(-1)}(z-q_j^-)^{-1} +\beta^{(0)}_{q_j^-}
\end{eqnarray}
modulo terms that are holomorphic near $q_j^-$ and vanish in $q_j^-$. 
Clearly, $\beta_{q_j^-}^{(-1)} =0$ unless $f_0^{-1} $ has a pole in $q_j^-$. We find
\begin{eqnarray*}
(G^{(1)}_{q_j^-} u)(x) &=& \frac{x}{2\pi i(q_j^--q_j^+)} 
\int_{|z-q_j^-|=\varepsilon}x^{-z} (\beta_{q_j^-}^{(-1)}(z-q_j^-)^{-1} +\beta^{(0)}_{q_j^-}) 
\pi_j \Big(\frac{\scrM u(q_j^-)}{z-q_j^-}\Big)\, dz\\
&=& \frac{x^{1-q_j^-}}{2\pi i(q_j^--q_j^+)} \Big(\beta_{q_j^-}^{(0)} + \ln x\beta_{q_j^-}^{(-1)}\Big)\pi_j \scrM u(q_j^-).
\end{eqnarray*}

We thus obtain:

\begin{lemma}\label{Esigma}
Let $\omega$ be a cut-off function near $\partial \B$ and $j\ge1$. Then
\begin{align*}
\widehat \scrE_{q_j^-}
&= \{ u\in C^\infty(\R_+\times \partial \B): u(x,y) = \omega(x) x^{-q_j^-} e(y); e \in E_j \} \text{ and} \\
\scrE_{q_j^-}&= \{ u\in C^\infty(\Int \B): u(x,y) = \omega(x) x^{-q_j^-} e(y)+
x^{-q_j^-+1}(\beta_{q_j^-}^{(0)} + \ln x \beta_{q_j^-}^{(-1)}) e(y) ; e \in E_j \}.
\end{align*} 
For $\scrE_{q_j^-}$ we identify here a neighborhood of $\partial \B$ with the collar $\R_+\times \partial \B$. 
\end{lemma} 

\subsubsection*{The spaces $\scrE_0$ and $\widehat \scrE_0$ for $n\ge 2$}
For $n\ge 2$, the pole in $0=q_0^-$ is simple. As $q_0^+=n-1$, the residue of $f_0^{-1}$ in $z=0$ is $-(n-1)^{-1}\pi_0$, and the residue theorem implies that 
 \begin{eqnarray*}
(G^{(0)}_0u)(x) &=&-\frac{x^0}{2\pi i(n-1)} \int_{|z|=\varepsilon} \frac{x^{-z}}z \pi_0(\scrM u(z)) \, dz= -\frac1{n-1} \pi_0\scrM u(0). 
\end{eqnarray*}

For $G^{(1)}_0$ we have the expression
\begin{eqnarray*}
(G^{(1)}_0u)(x) &=&-\frac x{2\pi i} \int_{|z|=\varepsilon} x^{-z} (f_0(z-1))^{-1} (-H' z+\Delta')
\Pi_0((f_0(z))^{-1} \scrM u(z)) \, dz.
\end{eqnarray*}
Equation \eqref{f0inverse} implies that the principal part of the Laurent expansion is given by 
$$-\frac1{n-1}\Pi_0\frac{\pi_0 \scrM u(z)}{z} =-\frac1{n-1} \frac{\pi_0 \scrM u(0)}{z}.$$
Moreover, we observed that $\Delta'$ has no zero order term. Since $\pi_0$ projects onto the constant functions, $\Delta'\pi_0=0$. We obtain
\begin{eqnarray*}
(G^{(1)}_0u)(x) &=&-\frac x{2\pi i(n-1) } \int_{|z|=\varepsilon} x^{-z} (f_0(z-1))^{-1} H' 
\pi_0 \scrM u(0)) \, dz.
\end{eqnarray*}
Hence there will be no contribution from $G_0^{(1)}$, unless $(f_0(z-1))^{-1}$ has a pole in $z=0$ or, equivalently, if $f_0^{-1}$ has a pole in $-1$. So let us assume that this is the case. 
Since \eqref{gamma_new} implies that $\frac{n+1}2-\gamma-2$ is not a pole of $f_0^{-1}$, $-1$ necessarily is one of the elements in the set $\{q_1^-,\ldots, q_k^-\}$, say $-1=q_\ell^-$. Since $q_\ell^+= n$, the residue in $z=0$ is 
\begin{eqnarray*}
-\frac{\pi_\ell}{1+q_\ell^+}= -\frac{\pi_\ell}{n+1}. 
\end{eqnarray*}
Thus 
\begin{eqnarray*}
(G^{(1)}_0u)(x) &=&-\frac x{n^2-1} \pi_\ell (H' \pi_0\scrM u(0)).
\end{eqnarray*}
We conclude: 

\begin{lemma}\label{nge2pole} Let $n\ge2$,
and let $\omega$ be a cut-off function near $\partial \B$. 
Then 
\begin{eqnarray*}
\widehat \scrE_0&=& \{ u\in C^\infty(\R_+\times \partial \B): u(x,y) = \omega(x) e(y); e\in E_0\} .
\end{eqnarray*}
If $-1$ is a pole of $f_0^{-1}=\sigma_M(\Delta)^{-1}$, say, $-1=q_\ell^-$, then 
\begin{eqnarray*}
\scrE_0 = \{u\in C^\infty(\Int(\B)): u(x,y) = \omega(x)(e_0+ x\pi_\ell((\partial_{x}H)_{|x=0}e_0)); e_0\in E_0\}; 
\end{eqnarray*}
Otherwise $\scrE_0 =\widehat \scrE_0$ with the identification of a neighborhood of $\partial \B$ with $\R_+\times \partial \B$. 
\end{lemma}

\begin{remark}\label{omit1}
If $-1=q_\ell^-$ is a pole of $f_0^{-1}$, the functions $u(x,y) = \omega(x) x\pi_\ell (\partial_x H|_{x=0})e_0$, $e_0\in E_0$, form a subset of $\scrE_{q^-_\ell}= \scrE_{-1}$. In the definition of $\scrD(\underline\Delta)$ in \eqref{domD} we can therefore replace $\scrE_0$ as defined in Lemma \ref{nge2pole} by $\{u\in C^\infty(\Int(\B)): u(x,y) = \omega(x) e(y); e\in E_0\}$.
\end{remark}

\subsubsection*{The spaces $\scrE_0$ and $\widehat \scrE_0$ for $n=1$. }
For $n=1$, we have $q_0^-=q_0^+=0$ and therefore a double pole of $(f_0(z))^{-1} = 
(\sigma_M(\Delta)(z))^{-1}$ in $z=0$, and \eqref{f0inverse} implies that near $z=0$
$$f_0(z) \equiv \frac{\pi_0}{z^2}$$
modulo terms holomorphic in $z=0$. Writing 
$$(\scrM u)(z) \equiv (\scrM u)(0) + (\scrM u)'(0)z + z^2g(z)$$
for a holomorphic function $g$ near $z=0$, we see that 
\begin{eqnarray*}
\Pi_0(f_0^{-1} \scrM u)(z) &=&\Pi_0\left(\frac{\pi_0}{z^2}((\scrM u)(0) + (\scrM u)'(0) z + z^2g(z)) \right) \\ &=& \frac{\pi_0((\scrM u)(0))}{z^2} + \frac{\pi_0((\scrM u)'(0))}{z} .
\end{eqnarray*}
The residue theorem implies that, for $u\in C^\infty_c(\R_+, C^\infty(\partial \B))$, 
\begin{eqnarray}
(G_0^{(0)}u)(x) &=& \frac1{2\pi i} \int_{|z|=\varepsilon} x^{-z} \left(\frac{\pi_0((\scrM u)(0))}{z^2} + \frac{\pi_0((\scrM u)'(0))}{z} \right) \, dz\nonumber \\
&=& -x^0\ln x \, \pi_0((\scrM u)(0)) + x^0 \pi_0 ((\scrM u)'(0)).
\label{G00}
\end{eqnarray}

Moreover, 
\begin{eqnarray}\lefteqn{(G_0^{(1)}u)(x)}\nonumber\\ 
&=& \frac{x}{2\pi i} \int_{|z|=\varepsilon} x^{-z} g_1(z) \Pi_0(f_0^{-1} \scrM u)(z) \, dz 
\nonumber\\
&=&- \frac{x}{2\pi i} \int_{|z|=\varepsilon} x^{-z} (f_0(z-1))^{-1} (-H'z+\Delta')\left(\frac{\pi_0((\scrM u)(0))}{z^2} + \frac{\pi_0((\scrM u)'(0))}{z} \right) \, dz 
\nonumber\\
&=& \frac{x}{2\pi i} \int_{|z|=\varepsilon} x^{-z} (f_0(z-1))^{-1}\left(H' \left(\frac{\pi_0((\scrM u)(0))}{z} + \pi_0((\scrM u)'(0)) \right)\right) \, dz ,
\nonumber
\end{eqnarray}
 where we have used the fact that $\Delta'$ has no zero order term and thus vanishes on the range of $\pi_0$. 

1. In case $z=0$ is {\em not} a pole, we conclude that 

\begin{eqnarray*}
(G_0^{(1)}u)(x) = x (f_0(-1))^{-1} (H'\pi_0((\scrM u)(0))).
\end{eqnarray*}
Here, $(f_0(-1))^{-1} = (1+\Delta_{h(0)})^{-1} $. 

2. In case $z=0$ {\em is} a pole of $(f_0(z-1))^{-1}$, i.e. $-1$ is a pole of $f_0^{-1}$, the considerations made before Lemma \ref{nge2pole} show that there must be an $\ell\in \{1,\ldots, k\} $ with $q_\ell^-=-1$. Then $q_\ell^+=1$ and, near $z=0$, 
\begin{eqnarray*}
(f_0(z-1))^{-1} \equiv -\frac12 \frac{\pi_\ell}{z} +S_0 
\end{eqnarray*}
modulo holomorphic functions that vanish to first order in $z=0$. 
Hence 
\begin{eqnarray*}\lefteqn{(f_0(z-1))^{-1}\left(H' \left(\frac{\pi_0((\scrM u)(0))}{z} + \pi_0((\scrM u)'(0)) \right)\right)}\\
&=& 
-\frac12\pi_\ell \left(H'\left(\frac{\pi_0((\scrM u)(0))}{z^2} + \frac{\pi_0((\scrM u)'(0))}{z} \right)\right)
+S_0 \left(H'\frac{\pi_0((\scrM u)(0))}{z}\right)
\end{eqnarray*}
modulo functions that are holomorphic near $z=0$. 

Inserting this into the formula for $G^{(1)}_0$, we find that 
\begin{eqnarray*}
\lefteqn{(G_0^{(1)}u)(x)} \\
&=& \frac12 x\ln x \,\pi_\ell \left(H'\pi_0(\scrM u(0))\right)
-\frac12 x \,\pi_\ell\left(H'\pi_0((\scrM u)'(0) \right) -\frac12 x\, S_0(H'\pi_0(\scrM u(0))). 
\end{eqnarray*}

As a consequence, we obtain

\begin{lemma}\label{n1gammapos}
Let $n=1$ and $\omega$ a cut-off function near $\partial \B$. Then
$$ \widehat\scrE_0 = \{u\in C^\infty(\R_+,C^\infty(\partial \B)): u(x,y) = \omega(x)(e_0+e_1\ln x); \ e_0,e_1\in E_0\} .$$
If $-1$ is not a pole of $f_0^{-1} = \sigma_M(\Delta)^{-1}$, then 
\begin{eqnarray*}
\lefteqn{\scrE_0 = \Big\{u\in C^\infty(\Int(\B)): }\\
&&\left.u(x,y) = \omega(x)\left(e_0(y)+\ln x\, e_1 +x(1+\Delta_{h(0)})^{-1} (\partial_xH|_{x=0} e_1)(y)\right); \, e_0,e_1\in E_0\right\}.
\end{eqnarray*}
Otherwise
\begin{eqnarray*}
\lefteqn{\scrE_0 = \Big\{u\in C^\infty(\Int(\B)):u(x,y) =\omega(x) \Big(e_0(y) -\frac x2\pi_\ell(\partial_xH|_{x=0}) e_0)(y)}\\ 
&&+\ln x e_1(y)+\frac x2\left( \ln x\, \pi_\ell (\partial_xH|_{x=0} e_1)(y) +S_0(\partial_xH|_{x=0} e_1)(y)\right)\Big) ; \, e_0,e_1\in E_0\Big\}.
\end{eqnarray*}
\end{lemma}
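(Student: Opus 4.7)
The plan is to apply Theorem \ref{theta}, which identifies $\scrE_0$ as the range of $G_0 = G_0^{(0)} + G_0^{(1)}$ provided $\sigma - 1 \ge \frac{n+1}{2} - \gamma - 2$. With $n = 1$ and $\sigma = 0$ this condition reduces exactly to $\gamma \ge 0$, the standing hypothesis of the lemma. Hence both pieces of $G_0$ contribute. The contribution of $G_0^{(0)}$ is already recorded in Lemma \ref{n1G0}: its range produces the summands $e_0(y) + \ln x\, e_1(y)$ with $e_0, e_1 \in E_0$ (after absorbing a sign so that $e_1 = -\pi_0(\scrM u(0))$ and $e_0 = \pi_0((\scrM u)'(0))$). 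It therefore suffices to compute $G_0^{(1)}$ and to add the resulting range to that of $G_0^{(0)}$.

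For $G_0^{(1)}$ I will start from the integral representation derived immediately before the lemma, in which the simplification $\Delta'\pi_0 = 0$ has already been applied and the principal part $\Pi_0(f_0^{-1}\scrM u)(z) = \pi_0(\scrM u(0))/z^2 + \pi_0((\scrM u)'(0))/z$ has been made explicit. Evaluating the integral amounts to computing the residue at $z=0$ of
\begin{equation*}
x \cdot x^{-z}\, (f_0(z-1))^{-1}\Big(H'\Big(\frac{\pi_0(\scrM u(0))}{z} + \pi_0((\scrM u)'(0))\Big)\Big),
\end{equation*}
and the analysis splits naturally according to whether or not $z = 0$ is a pole of $(f_0(z-1))^{-1}$, i.e.\ whether or not $-1$ is a pole of $f_0^{-1}$.

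In case (a), $-1$ is not a pole of $f_0^{-1}$, so $(f_0(z-1))^{-1}$ is holomorphic at $z = 0$ and the integrand has only a simple pole there, coming from the explicit factor $1/z$. A direct residue computation gives $(G_0^{(1)}u)(x) = x(f_0(-1))^{-1}(H'\pi_0(\scrM u(0))) = x(1+\Delta_{h(0)})^{-1}((\partial_xH)_{|x=0}\, e_1)$, and adding the known $G_0^{(0)}$ contribution yields precisely the form stated in (a).

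Case (b) is the main obstacle. Here $-1 = q_\ell^-$ is a pole of $f_0^{-1}$, so by \eqref{f0inverse} together with $q_\ell^+ = 1$ the Laurent expansion near $z = 0$ reads $(f_0(z-1))^{-1} \equiv -\tfrac12\,\pi_\ell/z + S_0$ modulo functions vanishing at $z=0$, with $S_0$ denoting the value of the regular part. The integrand now has a double pole at $z=0$, and the residue decomposes into three nonvanishing contributions: the product of the two $1/z$ factors, whose residue requires $\partial_z(x^{-z})|_{z=0} = -\ln x$ and produces the $x\ln x$ summand; the product of $-\tfrac12\pi_\ell/z$ with the regular term $\pi_0((\scrM u)'(0))$; and the product of $S_0$ with the $1/z$ part. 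The bookkeeping challenge is keeping the signs and factors of $\tfrac12$ straight and recognizing that, together with $G_0^{(0)}$, these three contributions assemble exactly into the formula in (b) once the two free parameters $e_0, e_1 \in E_0$ are identified. No further analytic input is required: everything reduces to tracking residues and expanding $x^{-z}$ to first order in $z$.
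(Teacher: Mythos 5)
Your proposal is correct and follows essentially the same route as the paper: identify $\scrE_0$ as the range of $G_0^{(0)}+G_0^{(1)}$ via Theorem \ref{theta} (the condition $\sigma-1\ge\frac{n+1}2-\gamma-2$ reducing to $\gamma\ge0$ for $n=1$, $\sigma=0$), take the $G_0^{(0)}$ part from Lemma \ref{n1G0}, and evaluate $G_0^{(1)}$ by residues after using $\Delta'\pi_0=0$, splitting into the two cases according to whether $(f_0(z-1))^{-1}$ is regular at $z=0$; the three residue contributions you list in case (b) are exactly those in the paper. One small remark: your computation correctly produces the $\ln x\, e_1$ term from $G_0^{(0)}$ in case (a) as well, which appears to have been dropped by typographical oversight in the printed statement of part (a).
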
 

\begin{remark}\label{omit2} 
Similarly to Remark \ref{omit1}, we can omit the terms $\omega(x) x\pi_\ell(\partial_xH|_{x=0} )e_0$, $e_0\in E_0$, appearing in Lemma \ref{n1gammapos} in the definition of the domain of $\underline \Delta$ , since they are contained already in the space $\scrE_{q_\ell^-}$. 
\end{remark}

\subsection{$H^\infty$-calculus, $\mathcal R$-boundedness, and maximal regularity}
For completeness of the exposition we recall a few basic definitions. A good reference is \cite{DHP}. 
Let $X_0$ and $X_1$ be Banach spaces with $X_1$ densely and continuously embedded in $X_0$. Moreover, let $-B\in \scrL(X_1,X_0)$ be the infinitesimal generator of an analytic semigroup with domain $\scrD(B)=X_1$ and $1<q<\infty$, $T>0$. In $L^q(0,T;X_0)$ consider the initial value problem
\begin{eqnarray}\label{PMR}
\partial_t u + Bu =f, \quad u(0) = u_0
\end{eqnarray}
for data $f\in L^p(0,T; X_0)$ and $u_0\in (X_0,X_1)_{1-1/q,q}$. 

\begin{definition}\label{MaxReg}$B$ has maximal $L^q$-regularity, if, 
for every $u_0\in (X_0,X_1)_{1-1/q,q}$ and every $f\in L^q(0,T; X_0)$,
the initial value problem \eqref{PMR} has a unique solution $u\in W^{1,q}(0,T; X_0)\cap L^q(0,T;X_1)$ that depends continuously on $u_0$ and $f$. 
\end{definition} 

The $H^\infty$-calculus for sectorial operators was introduced by A. McIntosh. 
Let $\Lambda_\theta$ be as in \eqref{Lambda}, let $X_0$ and $X_1$ be as above and let $B$ be a closed linear operator with domain $\scrD(B)=X_1$. Suppose that there exists a $C> 0$ such that
$$\|\lambda(\lambda-B)^{-1} \|_{\scrL(X_0)}\le C $$
for all $\lambda \in \Lambda_\theta$. Then one can define
$$ f(B)=\frac1{2\pi i}\int_{\partial \Lambda_\theta} f(\lambda)(\lambda-B)^{-1}\, d\lambda$$ 
for $f\in H^\infty_0(\Lambda_\theta)$, the space of bounded holomorphic functions on $\C\setminus \Lambda$ with additional decay properties near zero and infinity so that the integral makes sense.

\begin{definition} The operator $B$ is said to have a bounded $H^\infty$-calculus with respect to $\Lambda_\theta$, if there exists a constant $C$ such that, for all $f$ in $H^\infty_0(\Lambda_\theta)$, 
$$\|f(B)\|_{\scrL(E_0)} \le C\|f\|_\infty.$$ 
\end{definition} 

\begin{definition}\label{Rsectorial}
We call $B$ {\em $\mathcal R$-sectorial of angle $\theta$}, if there exists a constant $C>0$ such that for any choice of $\lambda_{1},...,\lambda_{N}\in \C\setminus \Lambda_{\theta}$, $x_{1},...,x_{N}\in X_0$, $N\in\mathbb{N}$, and the sequence $\{\epsilon_{\rho}\}_{\rho=1}^{\infty}$ of the Rademacher functions,
we have 
\begin{eqnarray}\label{RS1}
\Big\|\sum_{\rho=1}^{N}\epsilon_{\rho}\lambda_{\rho}(\lambda_{\rho}-B)^{-1}x_{\rho}\Big\|_{L^{2}(0,1;X_0)} \leq C \Big\|\sum_{\rho=1}^{N}\epsilon_{\rho}x_{\rho}\Big\|_{L^{2}(0,1;X_0)}.
\end{eqnarray}
\end{definition} 

We recall the following facts, which hold in UMD Banach spaces: 
\begin{proposition}\label{prop:MR} 
{\rm (a)} The existence of a bounded $H^\infty$-calculus implies the $\mathcal R$-sectoriality for the same sector according to Cl\'ement and Pr\"uss, \cite[Theorem 4]{CP}.

{\rm (b)} Every operator that is $\mathcal R$-sectorial on $\Lambda_\theta$ for some $\theta<\pi/2$, has maximal $L^q$-regularity, $1<q<\infty$, 
see Weis \cite[Theorem 4.2]{W}.
\end{proposition}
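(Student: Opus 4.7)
Since both assertions are classical results attributed in the statement to Cl\'ement--Pr\"uss and Weis respectively, the plan below sketches how one would prove each of them from scratch, assuming the general machinery of $H^\infty$-calculus, $R$-boundedness, and UMD-valued harmonic analysis.

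For part (a), the strategy is the one of Cl\'ement and Pr\"uss. A bounded $H^\infty$-calculus gives $\|f(B)\|_{\scrL(X_0)}\le C\|f\|_\infty$ for $f\in H^\infty_0(\Lambda_\theta)$, extended by approximation to all of $H^\infty(\Lambda_\theta)$. Applied to the rational family $f_\lambda(z)=\lambda/(\lambda-z)$, $\lambda\notin\Lambda_\theta$, which is uniformly $H^\infty$-bounded, this already gives uniform sectoriality via $f_\lambda(B)=\lambda(\lambda-B)^{-1}$. The upgrade to $R$-sectoriality is where the UMD assumption enters: for arbitrary $\lambda_1,\dots,\lambda_N\notin\Lambda_\theta$ and $x_1,\dots,x_N\in X_0$, one assembles the vector-valued holomorphic function $\Phi(z)=\sum_\rho\epsilon_\rho f_{\lambda_\rho}(z)x_\rho$ and evaluates $\Phi(B)=\sum_\rho\epsilon_\rho\lambda_\rho(\lambda_\rho-B)^{-1}x_\rho$. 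The bounded $H^\infty$-calculus controls $\|\Phi(B)\|_{L^2(0,1;X_0)}$ by $\sup_z\|\Phi(z)\|_{X_0}$, and Kahane's contraction principle reduces that supremum back to $\|\sum_\rho\epsilon_\rho x_\rho\|_{L^2(0,1;X_0)}$, which is exactly the $R$-boundedness inequality \eqref{RS1}.

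For part (b), the plan is to invoke Weis's operator-valued Fourier multiplier theorem. Reduce first to the case $u_0=0$ by subtracting the semigroup orbit $t\mapsto e^{-tB}u_0$, which lies in $W^{1,q}(0,T;X_0)\cap L^q(0,T;X_1)$ precisely by the characterization of $(X_0,X_1)_{1-1/q,q}$ as the trace space. For the homogeneous initial datum, write the solution as a convolution with operator-valued kernel $K(t)=e^{-tB}\chi_{t>0}$, so that maximal regularity amounts to the $L^q(\R;X_0)$-boundedness of the singular integral $f\mapsto \int_0^t Be^{-(t-s)B}f(s)\,ds$. After Fourier transform in $t$, the associated symbol is $m(\tau)=i\tau(i\tau+B)^{-1}$. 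Weis's theorem asserts that in the UMD space $X_0$ it suffices to verify $R$-boundedness of the families $\{m(\tau):\tau\in\R\setminus\{0\}\}$ and $\{\tau m'(\tau):\tau\in\R\setminus\{0\}\}$. Since $B$ is $R$-sectorial of some angle $\theta<\pi/2$, the imaginary axis lies in the interior of the sector of $R$-sectoriality, and both families are $R$-bounded by the resolvent identity together with the standard permanence properties of $R$-bounded sets.

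The main obstacle in (a) is genuinely the passage from scalar $H^\infty$-boundedness to $R$-boundedness: this step cannot be done at the level of a single operator and needs the randomization argument together with the UMD property, which is the technical core of Cl\'ement--Pr\"uss. In (b) the real difficulty is hidden inside Weis's multiplier theorem, whose proof relies on a dyadic decomposition of the symbol combined with operator-valued Littlewood--Paley theory in UMD spaces; once that is taken for granted, verifying its hypotheses for $m(\tau)=i\tau(i\tau+B)^{-1}$ is a short computation from the assumed $R$-sectoriality.
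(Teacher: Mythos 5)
The paper offers no proof of this proposition: it is stated explicitly as a recollection of two classical facts, and the intended justification consists of the citations to Cl\'ement--Pr\"uss \cite{CP} and Weis \cite{W}. Your proposal therefore necessarily departs from the paper by sketching proofs of the cited theorems themselves. Your outline of part (b) is the standard derivation of \cite[Theorem 4.2]{W}: reduce to $u_0=0$ via the trace-space characterization of $(X_0,X_1)_{1-1/q,q}$, write the solution operator as a singular convolution with symbol $m(\tau)=i\tau(i\tau+B)^{-1}$, and apply the operator-valued Mikhlin theorem in the UMD space $X_0$, whose hypotheses ($R$-boundedness of $\{m(\tau)\}$ and $\{\tau m'(\tau)\}$) follow from $R$-sectoriality of angle $\theta<\pi/2$. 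This is correct as a sketch.

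Part (a), however, has a gap at exactly the step you identify as the core. The inequality $\|\Phi(B)\|_{L^2(0,1;X_0)}\le C\sup_z\|\Phi(z)\|_{L^2(0,1;X_0)}$ for Rademacher sums $\Phi(z)=\sum_\rho\epsilon_\rho f_{\lambda_\rho}(z)x_\rho$ is \emph{not} a consequence of the scalar estimate $\|f(B)\|_{\scrL(X_0)}\le C\|f\|_\infty$; combined with Kahane's contraction principle as you propose, it is equivalent to the $R$-boundedness of the whole unit ball $\{f(B):\|f\|_{H^\infty}\le 1\}$. That stronger conclusion is the Kalton--Weis theorem and requires the Banach space to have property $(\alpha)$, not merely the UMD property assumed here. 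The Cl\'ement--Pr\"uss result quoted in the proposition is valid in every UMD space and is obtained by a different mechanism: one transfers the problem to $L^q(\R;X_0)$ and applies the vector-valued multiplier theorem to the resolvent family (equivalently, one passes through the bounded imaginary powers furnished by the $H^\infty$-calculus). So to complete your route for (a) you would either have to strengthen the hypothesis to property $(\alpha)$, or replace the randomization-of-the-calculus step by the transference argument of \cite{CP}. As an attribution the sketch is fine; as a proof, part (a) does not close.
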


All Mellin-Sobolev spaces $\cH^{s,\gamma}_p(\B)$ and $\cK^{s,\gamma}_p(\R_+\times \partial \B)$ used here are UMD Banach spaces, hence the existence of a bounded $H^\infty$-calculus on $\Lambda_\theta$ for $\theta<\pi/2$ implies maximal $L^q$-regularity.

\end{document}